\documentclass[12pt,oneside,reqno]{amsart}
\usepackage{xcolor}
\usepackage{mathabx}
\usepackage{multicol,amssymb}
\usepackage[inline]{asymptote}
\usepackage[hidelinks]{hyperref}
\usepackage{todonotes}
\usepackage{cleveref,float}
\usepackage[shortlabels]{enumitem}
\usepackage[letterpaper, width=165mm, top=25mm,bottom=25mm,bindingoffset=6mm]{geometry}
\definecolor{Maroon}{RGB}{128, 0, 0} % RGB values for maroon
\newcommand{\defin}[1]{\textcolor{Maroon}{\emph{#1}}\index{#1}}
\usepackage{thm-restate}
\theoremstyle{plain}

\newtheorem{theorem}{Theorem}
\numberwithin{theorem}{section}
\newtheorem{lemma}[theorem]{Lemma}
\newtheorem{fgl}[theorem]{Factor Group Lemma}
\newtheorem{corollary}[theorem]{Corollary}
\newtheorem{proposition}[theorem]{Proposition}

\numberwithin{caseprop}{subsection}

\theoremstyle{definition}
\newtheorem{definition}[theorem]{Definition}
\newtheorem{assumption}[theorem]{Assumption}
\newtheorem{notation}[theorem]{Notation}

\newtheorem{case}{Case}

\numberwithin{case}{section}
\newtheorem{subcase}{Subcase}
\numberwithin{subcase}{case}

\DeclareMathOperator{\Cay}{Cay}

\allowdisplaybreaks

\newcommand{\vol}[1]{\mathsf{Volt}(}
\newcommand{\gq}{ \gamma_{q} }
\newcommand{\gp}{ \gamma_{p} }

\makeatletter

\renewcommand{\tocsection}[3]{%
  \indentlabel{\@ifnotempty{#2}{\bfseries\ignorespaces#1 #2\quad}}\bfseries#3}
\renewcommand{\tocsubsection}[3]{%
  \indentlabel{\@ifnotempty{#2}{\ignorespaces#1 #2\quad}}#3}

\def\@dotsep{4.5}
\def\@tocline#1#2#3#4#5#6#7{\relax
  \ifnum #1>\c@tocdepth % then omit
  \else
    \par \addpenalty\@secpenalty\addvspace{#2}%
    \begingroup \hyphenpenalty\@M
    \@ifempty{#4}{%
      \@tempdima\csname r@tocindent\number#1\endcsname\relax
    }{%
      \@tempdima#4\relax
    }%
    \parindent\z@ \leftskip#3\relax \advance\leftskip\@tempdima\relax
    \rightskip\@pnumwidth plus1em \parfillskip-\@pnumwidth
    #5\leavevmode\hskip-\@tempdima{#6}\nobreak
    \leaders\hbox{$\m@th\mkern \@dotsep mu\hbox{.}\mkern \@dotsep mu$}\hfill
    \nobreak
    \hbox to\@pnumwidth{\@tocpagenum{\ifnum#1=1\bfseries\fi#7}}\par% <-- \bfseries for \section page
    \nobreak
    \endgroup
  \fi}
\AtBeginDocument{%
\expandafter\renewcommand\csname r@tocindent0\endcsname{0pt}
}
\def\l@subsection{\@tocline{2}{0pt}{2.5pc}{5pc}{}}
\makeatother

\setlistdepth{5}
\newlist{myEnumerate}{enumerate}{5}

\begin{document}

\title[Cayley Graphs of Order $pqrs$  are hamiltonian]{Cayley Graphs of Order $pqrs$  are hamiltonian}
\author{Florian Lehner}
\address{Department of Mathematics, University of Auckland,
38 Princes Street, Auckland 1010, New Zealand}
\email{florian.lehner@auckland.ac.nz}

\author{Farzad Maghsoudi}
%\address{Department of Mathematics and Computer Science,
%University of Lethbridge, Lethbridge, AB, Canada}
\email{farzad.maghsoudi93@gmail.com}

\author{Bobby Miraftab}
\address{School of Computer Science,
Carleton University, Ottawa, ON, Canada}
\email{bobby.miraftab@gmail.com}

\subjclass[2010]{05C25, 05C45
}
\keywords{Cayley graphs, hamiltonian cycles}
\begin{abstract}
Assume $ G $ is a finite group with order $ |G| = pqrs $, where $ p $, $ q $, $ r $, and $ s $ are distinct prime numbers.  
We prove that every connected Cayley graph of $ G $ contains a hamiltonian cycle. 
Our result drops all restrictions of all previously known results on hamiltonian cycles in Cayley graphs of groups of order $pqrs$.
\end{abstract}

{\mathversion{bold} \maketitle}
%\tableofcontents

\section[Introduction]{Introduction}
\label{ch:Introduction}

A well-known conjecture by Lovász states that every finite connected vertex-transitive graph contains a hamiltonian path \cite{babai1979problem}. 
So far, there are numerous papers about this conjecture in finite and infinite settings (see~\cite{BryantHerkeMaenhautWebb2018, darijani2025arc,ErdeLehner2022, ErdeLehnerPitz2020,LEHNER2026114798,MiraftabMorris2025}), but no counterexample to this conjecture has been found.

A related and extensively studied problem concerns the existence of hamiltonian cycles in vertex-transitive graphs. 
Thomassen \cite{bermond1978hamiltonian,kutnar2009hamilton} conjectured that there are only finitely many connected vertex-transitive graphs without a hamiltonian cycle. 
In contrast, Babai proposed that there are infinitely many such graphs \cite{babai1979problem,babai1995automorphism}. 
Currently, only five connected vertex-transitive graphs are known to lack a hamiltonian cycle: the complete graph $K_2$, the Petersen graph, the Coxeter graph, and two graphs obtained by replacing each vertex of the Petersen and Coxeter graphs with a triangle.

The problem has also been extensively studied in the context of Cayley graphs. For instance, it is known that Cayley graphs of specific orders contain hamiltonian cycles.
\begin{theorem}[\cite{8pq,Tenth,Farzad,pqrs,Fourteen,Eighteen}] \label{theorem 1.2}
Let $ G $ be a finite group. If $ |G| $ has any of the following forms (where $ p $, $ q $, $ r $, $ s $ are distinct primes), then every connected Cayley graph of $G$ contains a hamiltonian cycle:
  \begin{enumerate}[leftmargin=*]
      \item $ kp $, where $ 1 \leq k \leq 47 $,\label{theorem 1.2.1}
      \item $ kpq $, where $ 1 \leq k \le  9 $,\label{theorem 1.2.2}
      \item $ pqr $, \label{theorem 1.2.3}
      \item $ pqrs $, where $ p $, $q$, $r$, and $ s $ are odd distinct primes, \label{pqrs}
      \item $ kp^{2} $, where $ 1 \leq k \leq 4 $,
      \item $ kp^{3} $, where $ 1\leq k \leq 2 $,
      \item $ p^k $, where $1\leq k<\infty$. \label{theorem 1.2.6}
  \end{enumerate}
\end{theorem}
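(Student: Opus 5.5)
This statement is a compilation of known results rather than a new claim, so I would prove it by citation, matching each item to the literature. Item~\ref{theorem 1.2.1} ($|G|=kp$ with $k\le 47$) is the main theorem of \cite{Fourteen}/\cite{Eighteen}, which extends the earlier $k<32$ work. Item~\ref{theorem 1.2.2} ($kpq$ with $k\le 9$) comes from \cite{8pq} together with \cite{Farzad}, which treats the remaining small values of $k$. Item~\ref{theorem 1.2.3} ($pqr$) and the $kp^2$, $kp^3$ items fall under the same compilations \cite{Tenth,Fourteen}. Item~\ref{pqrs} (odd $pqrs$) is exactly the main result of \cite{pqrs}. Item~\ref{theorem 1.2.6} ($p$-groups) is Witte's theorem, which \cite{Tenth} records.

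If I had to reconstruct rather than cite, I would use the common skeleton of all these results. First, take a minimal generating set $S$, which is harmless because hamiltonicity passes to supergraphs on the same vertex set. Next, reduce to nonabelian groups, since abelian and Dedekind cases are classical. Then exploit that in each listed order the commutator subgroup $G'$ is cyclic of small order, or has prime-power order. The key tool is the Factor Group Lemma: if $G/N$ has a hamiltonian cycle $(s_1,\dots,s_n)$ in $\Cay(G/N;\overline{S})$ whose voltage $s_1\cdots s_n$ generates the cyclic normal subgroup $N$, then $\Cay(G;S)$ is hamiltonian. For $N=G'$ of prime order, one only needs a hamiltonian cycle in the abelian quotient with nontrivial voltage. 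This is obtained by deforming a standard cycle and comparing voltages, using the Marušič/Durnberger fact that the voltages of two nearby cycles differ by a commutator conjugate. For $p$-groups, induct on $|G|$ using a central subgroup of order $p$.

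The main obstacle in such a reconstruction is the case where every hamiltonian cycle of the quotient found by these deformations has trivial voltage. In particular this happens when $G'$ is not cyclic, or when $|G'|$ has two or more prime factors, so the voltage must generate a noncyclic or composite-order group. Here one passes to a chain $G'\supset N_1\supset\cdots$ and uses the Factor Group Lemma stepwise, or applies the Keating–Witte "$\mathbb{Z}_p\rtimes\ldots$" lemmas, which construct cycles in the covering directly. For the purpose of this introduction, however, the statement is taken as established by the cited works, and no new argument is needed.
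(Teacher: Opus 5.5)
Your proof by citation is exactly what the paper does: Theorem~\ref{theorem 1.2} is quoted from \cite{8pq,Tenth,Farzad,pqrs,Fourteen,Eighteen} with no argument of its own, so your reconstruction sketch is unnecessary (and its premise that $G'$ is cyclic or of prime-power order for every listed order fails in general, e.g.\ $G'\cong\mathbb{Z}_2\times\mathbb{Z}_2\times\mathbb{Z}_q$ for $A_4\times D_{2q}$ of order $8\cdot 3\cdot q$). Only the attributions need fixing: \cite{Eighteen} is Witte's prime-power theorem, which gives item~\ref{theorem 1.2.6} rather than item~\ref{theorem 1.2.1} (item~\ref{theorem 1.2.1} is \cite{Fourteen}); the cases $k\le5$ of item~\ref{theorem 1.2.2}, together with $pqr$, $kp^2$ and $kp^3$, come from \cite{Tenth}; and \cite{Farzad} gives $6pq$ and $7pq$ while \cite{8pq} gives $8pq$.
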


Furthermore, in \cite{Farzad2}, the hamiltonicity of Cayley graphs of groups of order  $pqrs$, where  $p, q, r$, and  $s$  are distinct prime numbers, was studied. More specifically,

\begin{theorem}[{\cite[Theorem 1.4]{Farzad2}}] \label{pqrs-3}
Assume $G$ is a finite group of order $pqrs$ with a minimal generating set $S$, where $p$, $q$, $r$, and $s$ are distinct primes. If $|S|\geq 3$,
then $\Cay(G;S)$ contains a hamiltonian cycle.
\end{theorem}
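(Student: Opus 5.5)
This result is quoted from \cite[Theorem 1.4]{Farzad2}, so in the paper it is a citation. If I had to prove it, I would argue as follows. Since $|G|=pqrs$ is squarefree, $G$ is metacyclic: by Hölder--Burnside--Zassenhaus, $G\cong \mathbb{Z}_m\rtimes\mathbb{Z}_n$ with $mn=pqrs$ and $\gcd(m,n)=1$. Moreover $G'=\mathbb{Z}_m$ is cyclic, and its order is a product of at most three of the primes. Because $G/G'$ is cyclic and $G'$ is cyclic of squarefree order, any \emph{minimal} generating set $S$ has size bounded by the number of prime divisors involved. So I would first list the possible structures of $S$ when $|S|\ge 3$:
\begin{itemize}
\item $|S|=4$, which forces essentially an abelian-like situation;
\item $|S|=3$, with the generators distributed over the Sylow pieces of $G'$ and the complement.
\end{itemize}

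The main tool is the Factor Group Lemma. Choose a normal subgroup $N$, for instance a Sylow subgroup of $G'$ of prime order or a product of them, such that $G/N$ has order a product of fewer primes. By \cref{theorem 1.2}(\ref{theorem 1.2.3}) or (\ref{theorem 1.2.2}), $\Cay(G/N;\overline S)$ has a hamiltonian cycle $(\bar s_1,\dots,\bar s_k)$. It then suffices that the voltage $s_1s_2\cdots s_k$ generates $N$. When $|N|$ is prime, we only need this voltage to be nontrivial. Minimality of $S$ with $|S|\ge 3$ gives slack here: a generator that is redundant modulo $N$ can be used to alter the cycle, say by replacing a pair $\bar a\bar b$ by an alternative route through $\bar c$. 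In this way we obtain two hamiltonian cycles in the quotient whose voltages differ. At least one of them is then nontrivial, and this is the standard argument of Witte--Morris-type papers.

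I would handle the cases by the size of $G'$:
\begin{itemize}
\item $|G'|=1$: $G$ is cyclic, hence abelian, and the result is classical.
\item $|G'|$ prime: take $N=G'$.
\item $|G'|$ a product of two or three primes: use $N$ equal to one prime-order factor of $G'$, apply the Factor Group Lemma to $G/N$ of order $pqr$, and induct on it via \cref{theorem 1.2}.
\end{itemize}
A key observation is that when $|S|\ge3$ and $S$ is minimal, some $s\in S$ lies in a proper normal subgroup or centralizes a Sylow subgroup of $G'$. This lets us build the quotient cycle with a prescribed edge sequence, for instance $(\bar a^{k-1},\bar b,\bar a^{-(k-1)},\bar c,\dots)$ patterns, whose voltage we can compute explicitly as a product of conjugates.

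The main obstacle will be the voltage computation in the $|S|=3$ case where all three generators act nontrivially on $G'$. There the voltage is a product of twisted terms $a^{i}b a^{-i}$, and one must rule out its being trivial simultaneously for all the modified cycles. My first attempt would be to compare two cycles that differ by a single "square" $a b a^{-1} b^{-1}$ replacement. Their voltages differ by a commutator conjugate, which is nontrivial in $N$ whenever $a$ and $b$ do not commute modulo $C_G(N)$. If every pair commutes modulo $C_G(N)$, I would switch to a different prime factor of $G'$ for $N$. I would also fall back on the odd-prime result \cref{theorem 1.2}(\ref{pqrs}) to dispose of the case where $2\nmid|G|$, leaving only the case $p=2$.
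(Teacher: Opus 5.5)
The paper gives no proof of its own here, since the statement is quoted from \cite{Farzad2}. Your sketch therefore has to stand on its own, and its decisive step is missing: you never produce a hamiltonian cycle in $\Cay(G/N;\overline S)$ whose voltage generates $N$.

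Theorem~\ref{theorem 1.2} only says that \emph{some} hamiltonian cycle exists in the quotient, and it gives no information about its voltage. So ``apply the Factor Group Lemma and induct via Theorem~\ref{theorem 1.2}'' proves nothing by itself.

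The device you propose for obtaining two cycles with different voltages does not work as stated.
\begin{enumerate}
\item The criterion ``$a$ and $b$ do not commute modulo $C_G(N)$'' is never met. Since $N$ is cyclic, $G/C_G(N)$ embeds in the abelian group $\operatorname{Aut}(N)$, so every pair of elements commutes modulo $C_G(N)$. Changing to another prime factor of $G'$ does not help.
\item A switch along a quotient $4$-cycle $(\bar a,\bar b,\bar a^{-1},\bar b^{-1})$ requires $[a,b]\in N$, which need not hold. Even then, up to base point and orientation, the switch yields a single cycle only when the original cycle has the form $C=(a,w_1,a,w_2)$ with $\bar a\bar w_1=\bar b$. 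The new cycle is then $C'=(b,w_1^{-1},b,w_2)$, with the arc $w_1$ reversed.
\item Writing $n_1=b^{-1}aw_1\in N$, a direct computation gives $\operatorname{Volt}(C')=[a,b]\,(bn_1b^{-1})^{-2}\operatorname{Volt}(C)$. So the change depends on the uncontrolled voltage $n_1$ of the reversed arc, not merely on a conjugate of $[a,b]$, and nothing prevents both voltages from being trivial.
\end{enumerate}
This is exactly the situation you label the main obstacle and leave open, and it is where the whole difficulty of the theorem lies.

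The structural claims that organize your case split are also false.
\begin{enumerate}
\item The ``key observation'' fails for $G=D_{2pqr}$ with $p,q,r$ odd. Let $f$ be a reflection and $\gamma_p,\gamma_q,\gamma_r$ rotations of orders $p,q,r$. Then $S=\{f,\,f\gamma_p\gamma_q,\,f\gamma_q\gamma_r\}$ is a minimal generating set, since any two of its elements generate a dihedral subgroup of order $2pq$, $2qr$ or $2pr$. Yet every element of $S$ is a reflection, so it inverts $G'=\mathbb{Z}_{pqr}$ and lies in no proper normal subgroup.
\item The claim that $|S|=4$ forces an abelian-like situation also fails. The set $\{f,\gamma_p,\gamma_q,\gamma_r\}$ is a minimal generating set of the same group, which has trivial centre and $|G'|=pqr$.
\item Even the case $|G'|$ prime is not settled by ``take $N=G'$''. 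You still need a hamiltonian cycle in the abelian quotient with non-trivial voltage, and that is the content of a separate theorem.
\end{enumerate}
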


In this paper, we drop the condition on the size of $S$, and we prove the following.

\begin{theorem}\label{main_1}
    Assume $ G = \langle S \rangle $ is a finite group of order $ pqrs $, where $ p $, $ q $, $ 
r $, and $ s $ are distinct primes. Then $ \Cay(G;S) $ has a hamiltonian cycle.
\end{theorem}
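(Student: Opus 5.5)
We may assume $S$ is a minimal generating set: deleting redundant generators only removes edges, so a hamiltonian cycle of the smaller Cayley graph is one of $\Cay(G;S)$. By Theorem~\ref{pqrs-3} we may assume $|S|\le 2$, and by Theorem~\ref{theorem 1.2}\eqref{pqrs} we may assume one of the primes, say $p$, equals $2$. If $|S|=1$, then $G$ is cyclic and $\Cay(G;S)$ is itself a cycle. So the remaining case is $S=\{a,b\}$ with $|G|=2qrs$ and $q,r,s$ odd.

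The group structure is the starting point. A group of squarefree order has all Sylow subgroups cyclic, so it is metacyclic: $G\cong \mathbb{Z}_m\rtimes\mathbb{Z}_n$ with $mn=2qrs$, $\gcd(m,n)=1$, and $G'=\mathbb{Z}_m$ cyclic. If $G$ is abelian, it is cyclic and every connected Cayley graph of an abelian group is hamiltonian, so assume $G'\neq 1$.

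The tool is the standard Factor Group Lemma combined with Marušič's voltage/\,``Skewed'' Generators arguments. Pick a normal subgroup $N$ of prime order $\ell$ contained in $G'$. The quotient $\overline{G}=G/N$ has order a product of three distinct primes, or of two primes times~$2$. By Theorem~\ref{theorem 1.2}\eqref{theorem 1.2.3} (or the $kpq$ case), $\Cay(\overline{G};\overline{S})$ has a hamiltonian cycle $C=(s_1,\dots,s_t)$. We want such a cycle whose voltage $s_1s_2\cdots s_t$ generates $N$; then $C$ lifts to a hamiltonian cycle of $\Cay(G;S)$.

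With $S=\{a,b\}$, the natural family is built as in the $pqr$ proofs. One takes hamiltonian cycles in the quotient of the form $(a^{k},b)^{j}$-type words, or cycles obtained from a basic one by replacing a segment $a b a^{-1}$ by $b a b^{-1}$ (a ``rotation'' along a $4$-cycle $[a,b]$), and compares the voltages. Each such local modification multiplies the voltage by a conjugate of the commutator $[a,b]\in G'$. Because $G'$ is cyclic and $S$ generates $G$, $[a,b]$ generates $G'$, so its projection to $N$ is nontrivial. The plan is therefore to exhibit at least $\ell$ (or at least two consecutive) hamiltonian cycles in the quotient whose voltages differ by a fixed nontrivial element of $N$. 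Not all of them can then have trivial voltage.

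I expect the main obstacle to be this voltage computation in the case splits. These are: $2$ dividing $|G'|$ or not; whether $a$ or $b$ (or both) lies in a proper normal subgroup; and the prime $\ell$ being $2$, which is where the odd-order argument behind Theorem~\ref{theorem 1.2}\eqref{pqrs} breaks down. When $\ell=2$, nontriviality of the voltage is a parity condition, and the family of modified cycles must contain one with an odd number of rotations. Here I would first try choosing $N$ to be an odd-order subgroup of $G'$ whenever $|G'|$ is not $2$, and dispose of $|G'|=2$ directly: then $G\cong D_{2}\times\mathbb{Z}_{qrs}$-like, essentially a direct product of $\mathbb{Z}_{qrs}$ with a group of order $2$ acting trivially. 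When the quotient approach fails, because some generator becomes trivial or the quotient cycles are too rigid, I would instead take $N=\langle a\rangle\cap G'$ or use the central-subgroup case. A generator of prime-power-free centralizer gives a cyclic cover of a prism or Möbius-ladder-like quotient, where explicit hamiltonian cycles can be written down by hand.
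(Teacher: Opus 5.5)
Your opening reductions match the paper's proof exactly: pass to a minimal generating subset, use \Cref{pqrs-3} when $|S|\ge 3$, use \Cref{theorem 1.2}\eqref{pqrs} when all four primes are odd, and note that $|S|=1$ is trivial. After that, instead of concluding, you try to settle the two-generator case of order $2\cdot(\text{three odd primes})$ from scratch. That part is only a heuristic, and its central step fails.

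\textbf{The gap.} The decisive claim is that some hamiltonian cycle in $\Cay(G/N;SN/N)$ has voltage generating $N$. You never establish it. Knowing from \Cref{theorem 1.2} that the quotient has \emph{some} hamiltonian cycle gives no control over its voltage. The switching mechanism you propose to produce varying voltages does not exist in the case that matters.
\begin{itemize}
\item Your worries about $\ell=2$ and $|G'|=2$ are vacuous. A group of squarefree even order has a normal $2$-complement, so $|G'|$ is odd.
\item By \Cref{proposition 5.4}, the only remaining case is $|G'|$ a product of two odd primes. Then $[a,b]$ generates $G'$ (\Cref{lemma 5.12}), so $[a,b]\notin N$ for $N$ of prime order.
\item Hence $aN$ and $bN$ do not commute, and $\Cay(G/N;SN/N)$ contains no $4$-cycles labelled $(a,b,a^{-1},b^{-1})$ to rotate along. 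Also, the substitution $aba^{-1}\mapsto bab^{-1}$ does not in general preserve the endpoint of the walk.
\item Since $N\cap Z(G)=\{e\}$, any local switch multiplies the voltage by some $\gamma^{f(\tau)}$. Here $\tau$ encodes the conjugation action on $N=\langle\gamma\rangle$, and $f$ depends on where the switch is made. So there is no fixed element by which voltages differ. One must show that specific polynomials in $\tau$ do not all vanish modulo a prime.
\end{itemize}

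\textbf{What the paper does instead.} That verification is the whole content of \Cref{2pqr}, and it needs explicit constructions.
\begin{itemize}
\item When $|\overline a|=r$, $|a|=rp$ and $r\equiv 1\pmod p$, the paper finds a spanning generalized Petersen graph $GP(rp,\,r(p-2)+1)$ in $\Cay(G/\mathbb Z_q;\widecheck S)$. It takes a hamiltonian cycle $C_0$ there and two $8$-cycle switches $D_1,D_2$ (\Cref{prop:gp-cycles}).
\item Modulo $\mathbb Z_p$, the three voltages are $\gamma^{\varepsilon\tau^jV_t}$. If $V_0\equiv V_1\equiv V_2\equiv 0\pmod q$, this would force $\tau^p\equiv 1\pmod q$. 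That is impossible because $\tau$ has order $r>p$.
\item The case $p=5$ and the cases $|\overline a|=2r$ need further explicit cycles, typically combined with the symmetry $\tau\leftrightarrow\tau^{-1}$ obtained by inverting the generators.
\item These arguments use that the odd primes are at least $5$, for example $3\not\equiv 0\pmod q$ and $6\nmid 2r$.
\end{itemize}

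You also omitted the reduction that secures this hypothesis. If one of the odd primes is $3$, then $|G|=6uv$ and \Cref{theorem 1.2}\eqref{theorem 1.2.2} applies. With that reduction, the paper's proof of \Cref{main_1} simply invokes \Cref{2pqr}; your sketch does not replace that theorem.
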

\noindent{\bf An overview of the proof techniques:}
The case $|S|=1$ is trivial. Moreover, we may apply parts (2) and (4) of Theorem~\ref{theorem 1.2}, and Theorem~\ref{pqrs-3}, so we only need to prove the following.

\begin{restatable}{theorem}{mainn}\label{2pqr}
    Assume $ G = \langle S \rangle $ is a finite group of order $ 2pqr $, such that $ |S|=2 $, where $ p $, $ q $, and $ r $ are distinct primes, and $ p,q,r\geq 5 $. Then $ \Cay(G;S) $ has a hamiltonian cycle.  
\end{restatable}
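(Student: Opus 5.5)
We analyse the structure of $G$ and reduce to the standard toolkit for Cayley graphs of squarefree order: the Factor Group Lemma together with Marušič's ``voltage'' argument. Since $|G| = 2pqr$ is squarefree, $G$ is metacyclic. Every Sylow subgroup is cyclic, so $G' $ is cyclic, $G/G'$ is cyclic, and $\gcd(|G'|,|G/G'|)=1$. Write $S=\{a,b\}$.

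The case $G'=\{e\}$ (abelian, hence cyclic) is classical: every connected Cayley graph on an abelian group is hamiltonian. We may therefore assume $G'\neq\{e\}$ and $|G'|\in\{p,q,r,pq,pr,qr,pqr,2\cdot(\ldots)\}$. Also $2\nmid|G'|$ unless $G/G'$ has odd order, which is impossible because $G/G'$ is cyclic of order coprime to $|G'|$. Hence $|G'|$ is odd, $G'\le$ the odd part, and $G/G'$ is cyclic of even order.

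The key step is the Factor Group Lemma applied to $\overline G = G/G'$. Take a hamiltonian cycle $(\bar s_1,\dots,\bar s_m)$ in $\Cay(G/G';\bar S)$ whose voltage $\gamma=s_1\cdots s_m$ generates $G'$. Since $G'$ is cyclic of squarefree odd order, it suffices that $\gamma\notin N$ for each maximal subgroup $N$ of $G'$, i.e.\ that the voltage is nontrivial modulo each prime factor $t\in\{p,q,r\}$ of $|G'|$. Because $G/G'$ is cyclic and generated by $\bar a,\bar b$, the standard family of hamiltonian cycles in the 2-generated cyclic Cayley graph is available: $(\bar a^{k}, \bar b, \bar a^{-(m'-k)}\ldots)$, i.e.\ the cycles $C_k=(a^{k},b,a^{-(n-k-\dots)},b,\dots)$ parametrised by $k$. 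Their voltages are computed as products $a^{k}b a^{-j}b\cdots$ in the metacyclic presentation $\langle x,y\mid x^{|G'|}=1,\ y^{n}=x^{*},\ yxy^{-1}=x^{\tau}\rangle$.

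One shows that each $\gamma_k$ is an affine or geometric expression in $k$ modulo each prime $t$. Then, for a fixed $t$, at most one or two values of $k$ give $\gamma_k\equiv 1$ modulo the $t$-part. Since $p,q,r\ge5$, there are enough choices of $k$ to avoid the bad ones for all three primes simultaneously (a counting argument, roughly $|G/G'|$ or its number of cosets of $\langle\bar a\rangle$ versus at most $3\cdot 2$ bad values). If $\bar a$ or $\bar b$ alone generates $G/G'$, we use the simpler cycle $\bar a^{m}$ or its variants $(\bar a^{m-1},\bar b,\bar a^{-(m-1)},\bar b^{-1})$ adjusted appropriately.

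I expect the main obstacle to be the degenerate subcases where the candidate family is too small to dodge all bad values. Examples are when $|G/G'|=2$ or $2t$ with $t$ small, when one generator lies in $G'$ or has order $2$, or when $\bar a=\bar b^{\pm1}$. In those cases I would instead use a normal subgroup $N$ of prime order (e.g.\ a minimal normal subgroup inside $G'$). By Theorem~\ref{theorem 1.2}(\ref{theorem 1.2.3}) or the $kpq$ case, the quotient $G/N$ of order $2\cdot(\text{two primes})$ has hamiltonian Cayley graphs, and we apply the Factor Group Lemma with $|N|$ prime: any cycle with nontrivial voltage suffices. Finding a hamiltonian cycle in $\Cay(G/N;\bar S)$ with nontrivial voltage is done by exhibiting two hamiltonian cycles differing by a local ``switch'' at a $4$-cycle $\bar a\bar b\bar a^{-1}\bar b^{-1}$, whose voltages differ by a commutator $[a,b]\notin N$ up to conjugacy. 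When $[a,b]\in N$ one uses instead that $G/N$ is then abelian, which reduces to the prior computation.
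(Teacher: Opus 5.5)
There is a genuine gap: neither of the two steps that carry your argument works.

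\textbf{The counting step over $G/G'$ fails.} After the standard reductions one has $|G'|=pq$ and $G/G'\cong\mathcal C_{2r}$. Take the situation of Case~\ref{case:2pqr-1}: $|\overline a|=r$, $b$ an involution with $|\overline b|=2$, and $a=a_r\gamma_p$ with $\mathbb Z_r$ centralizing $\mathbb Z_p$ and $r\equiv1\pmod p$ (for example $p=7$, $r=29$, $q=59$). Then $\Cay(G/G';\overline S)$ is the prism over an $r$-cycle. Since $r$ is odd, its only hamiltonian cycles use two consecutive spokes, so every hamiltonian cycle has voltage conjugate to $(a^{r-1}ba^{-(r-1)}b)^{\pm1}$. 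But $p\mid r-1$ gives $a^{r-1}=a_r^{-1}$, and $a_r$ is central modulo $\mathbb Z_q$, so this voltage lies in $\mathbb Z_q$ and never generates $G'$. There is no family to count over.

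More generally, your claim that each prime has only one or two bad parameter values is false. The voltage exponents are polynomials in $\tau$ (where $a\gamma a^{-1}=\gamma^\tau$), and the obstructions come from $\tau$, not from the parameter. For instance, when $|\overline a|=|\overline b|=2r$ and $b=a^m\gamma$, the cycle $(\overline b,\overline a^{-(m-1)},\overline b,\overline a^{2r-m-1})$ has voltage $\gamma^{\tau^m(1+\tau)}$. Its $\mathbb Z_q$-component vanishes whenever $\tau\equiv-1\pmod q$, whatever $m$ is.

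\textbf{The fallback switch does not exist.} The walk $(aN,bN,a^{-1}N,b^{-1}N)$ is closed in $\Cay(G/N;SN/N)$ if and only if $[a,b]\in N$. So when $[a,b]\notin N$ there is no such $4$-cycle to switch on. When $[a,b]\in N$, the quotient $G/N$ is abelian, which forces $G'\le N$ and $|G'|$ prime, and that is not the hard case. Knowing from Theorem~\ref{theorem 1.2} that $\Cay(G/N;SN/N)$ is hamiltonian says nothing about voltages.

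The missing idea is the paper's construction for exactly the bad situation above, after Lemma~\ref{lemma 5.16.} disposes of $\gcd(|a|,r-1)=1$ and Lemma~\ref{lemma 5.8} disposes of $D_{2pq}\times\mathbb Z_r$. One passes to $G/\mathbb Z_q\cong\mathbb Z_r\times D_{2p}$, whose Cayley graph contains a spanning generalized Petersen graph $GP(rp,r(p-2)+1)$. There one builds three hamiltonian cycles $C_0,C_1,C_2$ that differ by switches along $8$-cycles (Proposition~\ref{prop:gp-cycles}). Their voltage exponents cannot all vanish modulo $q$, because the exponent by which $a_r$ acts on $\mathbb Z_q$ has order $r>p$. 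The case $p=5$ needs a separate cycle together with the $\tau\leftrightarrow\tau^{-1}$ symmetry. The other two cases likewise need explicitly chosen second cycles (in $G/\mathbb Z_p\cong D_{2q}\times\mathbb Z_r$ or in $G/G'$) combined with the $\tau\leftrightarrow\tau^{-1}$ trick, not a generic count.

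A minor point: your argument that $|G'|$ is odd is circular. The correct reason is that a group of order twice an odd number has a subgroup of index $2$, so $G/G'$ has even order.
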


Since the group $G$ is generated by two elements, $a$ and $b$, we analyze three distinct cases based on the orders of $a$ and $b$. In each case, we first consider the quotient group $\overline{G} = G/G'$, where $G'$ is the commutator subgroup of $G$. Using the images $\overline{a}$ and $\overline{b}$ in this quotient group, we attempt to construct a hamiltonian cycle in $\Cay(\overline{G};\{\overline a, \overline b\})$. 
If this hamiltonian cycle lifts to a hamiltonian cycle in the Cayley graph of $G$, we are done.

If there is no such hamiltonian cycle that can be lifted, we consider the quotient groups $ \widehat{G} = G/\langle \gp \rangle $ or $ \widecheck{G} = G/\langle \gq \rangle $, where $ \gp $ and $ \gq $ are generators of cyclic subgroups of $G$ of orders $ p $ and $ q $, respectively. In these quotient groups, we again aim to identify at least one hamiltonian cycle that can successfully be lifted.
A key ingredient to our proof is identifying spanning generalized Petersen graphs contained in Cayley graphs of these quotient groups; hamiltonian cycles inside such a spanning subgraph can be shown to lift to hamiltonian cycles of $\Cay(G;S)$.

\section{Preliminaries}\label{ch:Preliminaries}
This section establishes basic terminology and notation, and states a number of results that will be used in the proof of~\Cref{2pqr}.
All graphs in this paper are undirected and do not contain loops, but may contain parallel edges; all groups are finite.

Following {\cite[Definition 1.1]{Tenth}} and {\cite[p.~34]{Godsil}}, if $ G =\langle S\rangle$, the \defin{Cayley graph} $ \Cay(G;S) $ is the graph whose vertices are elements of $ G $, with an edge joining $ g $ and $ gs $, for every $ g\in G $ and $ s\in S $. The commutator $ ghg^{-1}h^{-1} $ of $ g $ and $ h $ is denoted by $ [g,h] $. The derived subgroup $G'$ is the subgroup of $G$ generated by all commutators, and we write $ \overline{G} = G/G' $. Moreover, we write $ \overline{g} = gG' $ for any $ g \in G $, and $ \overline{S} = \{\overline{g};g\in S\} $ for any $ S \subseteq G $. 

The notation $ G \ltimes H $ denotes a semidirect product of groups $ G $ and $ H $, where $ H $ is normal. We write $ D_{2n} $ for the dihedral group of order $ 2n $, and $ e $ for the identity element of $ G $. The symbol $\mathcal{C}_n$ denotes the cyclic group of order $n$.

For $ S\subseteq G $, a sequence $ (s_{1}, s_{2},\ldots,s_{n}) $ of elements of $ S\cup S^{-1} $ specifies the walk in $ \Cay(G;S) $ that visits the vertices $ e, s_{1}, s_{1}s_{2},\ldots, s_{1}s_{2}\cdots s_{n} $. Its inverse is $ (s_{1}, s_{2},\ldots,s_{n})^{-1} = (s_{n}^{-1}, s_{n-1}^{-1},\ldots,s_{1}^{-1}) $. For $ k\in\mathbb{Z^{+} } $, the notation $ (s_{1}, s_{2},\ldots,s_{m})^{k} $ denotes the concatenation of $ k $ copies of the sequence $ (s_{1}, s_{2},\ldots,s_{m}) $, and by extension also the walk corresponding to this sequence. We use $ (\overline{s_{1}}, \overline{s_{2}},\ldots,\overline{s_{n}}) $ to denote the image of this walk in $ \Cay(G/G';\overline{S}) = \Cay(\overline{G};\overline{S}) $. When a power $s^k$ appears as one entry of a sequence defining a walk, we tacitly assume that it denotes $|k|$ consecutive steps labelled $s$ if $k>0$, $|k|$ consecutive steps labelled $s^{-1}$ if $k<0$, and the empty sequence if $k=0$.

Fix a group $G = \langle a,b \rangle $ of order $2pqr$, and let $ a_2 $, $ a_r $, $ \gp $, and $ \gq $ denote elements of $ G $ of orders $2$, $r$, $p$, and $q$, respectively. We denote the subgroups of $G$ generated by these elements by $ \mathbb{Z}_{2} $, $ \mathbb{Z}_r $, $ \mathbb{Z}_p $, and $ \mathbb{Z}_q $.
If $ \mathbb{Z}_p $ is normal, we write $\widehat{G} = G/\mathbb{Z}_p$; if $\mathbb{Z}_q$ is normal, we write $\widecheck{G} = G/\mathbb{Z}_q$. Moreover, in these cases we define $ \widehat{g} = g\mathbb{Z}_p $, $ \widecheck{g} = g\mathbb{Z}_q $ for any $ g \in G $, and $ \widehat{S} = \{\widehat{g};g\in S\} $, $ \widecheck{S} = \{\widecheck{g};g\in S\} $ for any $ S \subseteq G $.

In the remainder of this section we provide some known results which will be used in the proof of \Cref{2pqr}. The following well-known result handles the special case where $G$ is abelian. 

\begin{lemma}[{\cite[Corollary on page 257]{Quimpo}}] \label{abelain group}
Assume $ G $ is a non-trivial abelian group. Then every connected Cayley graph on $ G $ has a hamiltonian cycle.
\end{lemma}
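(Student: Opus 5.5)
We prove the statement by induction on the number of generators. We may assume $S$ is a minimal generating set, since deleting generators only removes edges: if $\Cay(G;S_0)$ with $S_0\subseteq S$ minimal has a hamiltonian cycle, so does $\Cay(G;S)$. Write $S=\{s_1,\dots,s_k\}$ and $H=\langle s_1,\dots,s_{k-1}\rangle$. Because $G$ is abelian, $H$ is normal and $G/H$ is cyclic, generated by the image of $s_k$. Let $m=|G/H|$; then $m\ge 2$ by minimality of $S$.

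\emph{Base case and degenerate small cases.} If $k=1$, then $G=\langle s_1\rangle$ is cyclic of order $n$. The walk $(s_1)^n$ is a hamiltonian cycle when $n\ge3$. When $n=2$ we get $K_2$, or a $2$-cycle if parallel edges are allowed. The paper's convention permits parallel edges, so I would read the statement with that convention and treat $|G|=2$ as giving a degenerate $2$-cycle. The same care is needed in the inductive step when $|H|\le 2$.

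\emph{Inductive step.} By induction, $\Cay(H;\{s_1,\dots,s_{k-1}\})$ has a hamiltonian cycle $C=(t_1,\dots,t_n)$, where $n=|H|$ and $t_1t_2\cdots t_n=e$. The cosets $H, s_kH,\dots,s_k^{m-1}H$ partition $G$, and by commutativity each coset $s_k^iH$ carries a translated copy of $C$. I would build the cycle as follows. Traverse $H$ along the path $(t_1,\dots,t_{n-1})$, which omits the last edge of $C$, then step by $s_k$ into the next coset. Traverse that coset along the reversed path, step by $s_k$ again, and continue in this alternating fashion. This is the standard ``zig-zag'' on a prism over $C$, i.e.\ the graph $C_n\times P_m$.

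\emph{Closing up.} The step $s_k^m$ lands in $H$ at the element $h_0=s_k^m\in H$, which need not be $e$, so the cycle must be closed more carefully. I would use the fact that a Cartesian product of a cycle with a path is hamiltonian, and realize the required product structure inside the Cayley graph. Concretely:
\begin{itemize}
\item In the first $m-1$ cosets, do the zig-zag with all paths of length $n-1$.
\item Arrange the columns so that the last coset is left at a vertex $x$ with $x s_k\in H$.
\item Return through $H$ to close up.
\end{itemize}
The simplest version: traverse $H\setminus\{e\}$ appropriately, use the $s_k$-edges going up and down, and close via the $s_k$-edges from $s_k^{m-1}H$ back to $H$. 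These edges are translates of $x\mapsto xs_k$, and they map the cycle in coset $s_k^{m-1}H$ onto $C$ in $H$ rotated by $h_0$.

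The main obstacle is this closing step, where $h_0\ne e$ and the parity of $m$ interacts with the zig-zag. If the direct argument gets messy, I would first try the known alternative of Chen--Quimpo: replace the hamiltonian cycle in $H$ by the stronger induction hypothesis that the Cayley graph is \emph{hamiltonian connected}, or hamiltonian laceable in the bipartite case. Alternatively, use Rankin's/the Factor Group Lemma style argument: choose the hamiltonian cycle in the cyclic quotient $\Cay(G/H;\overline{s_k})$ and adjust it by inserting one pass through each coset, so that the voltage product becomes a generator. Given that the statement is quoted from Chen--Quimpo, in the paper I would simply cite it. A self-contained proof would follow the strengthened induction: every connected Cayley graph on an abelian group of order at least $3$ is hamiltonian connected when non-bipartite, and laceable when bipartite. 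In that argument the closing step reduces to choosing endpoints in $H$ freely, and the obstacle becomes verifying the small base cases such as $\mathbb{Z}_2^2$ and $\mathbb{Z}_2\times\mathbb{Z}_4$.
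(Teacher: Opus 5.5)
The paper gives no proof of this lemma; it only cites Chen--Quimpo, so your final fallback of citing it is what the paper does. As a self-contained proof, however, your proposal has a gap: the inductive step is never finished. You correctly note that, since $G$ is abelian, $x\mapsto xs_k$ carries the translate of $C$ in $s_k^iH$ onto the translate in $s_k^{i+1}H$. Hence $\Cay(G;S)$ contains a spanning Cartesian product $C_n\times P_m$ whose layers are $H,s_kH,\dots,s_k^{m-1}H$. You then try to close up through the $s_k$-edges from $s_k^{m-1}H$ back to $H$. That brings in $h_0=s_k^m$ and the parity problem, and you leave the step open.

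Those edges are not needed, because $C_n\times P_m$ is itself hamiltonian for all $n,m\ge 2$. Label vertices by (position $j$ on $C$, layer $i$).
\begin{itemize}
\item If $m$ is even: go from $(0,0)$ to $(1,0)$, then snake through positions $1,\dots,n-1$ layer by layer until you reach $(1,m-1)$. Step to $(0,m-1)$ and descend column $0$.
\item If $m$ is odd: run along layer $0$ from position $0$ to $n-1$, then climb column $n-1$ to layer $m-1$. Use the edge $t_n$ of $C$ in that layer to reach $(0,m-1)$. Then snake through positions $0,\dots,n-2$ in layers $m-1,\dots,1$. That is an even number of layers, so you finish at $(0,1)$ and step down to $(0,0)$.
\end{itemize}
This closes the induction at once, and $h_0$ never appears. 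For $n=2$ the cycle is just the boundary of a $2\times m$ ladder.

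Your proposed strengthenings would not rescue the argument as stated. The claim that every connected Cayley graph on an abelian group of order at least $3$ is hamiltonian connected (non-bipartite) or laceable (bipartite) is false for cycles. $\Cay(\mathbb{Z}_5;\{1\})$ has no hamiltonian path from $0$ to $2$, and $\Cay(\mathbb{Z}_6;\{1\})$ has none from $0$ to $3$. Chen--Quimpo's theorem requires valency at least $3$, so an induction of that kind must handle valency-$2$ layers separately. The Factor Group Lemma route is not directly available either: it needs a cyclic normal subgroup, and $H$ need not be cyclic. Your caveat about $|G|=2$ is apt, since the paper's introduction itself lists $K_2$ as non-hamiltonian. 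It does not affect the paper, which only applies the lemma when $|G|=2pqr$.
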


The next lemma (and its corollary) often provides a way to lift a hamiltonian cycle in $ \Cay(G/N;SN) $ to a hamiltonian cycle in $\Cay(G;S)$. Before stating the results, we introduce a useful piece of notation.

\begin{notation}
Suppose $N$ is a normal subgroup of $G$, and let $(s_1,s_2,\ldots,s_n) $ be a sequence of elements in $S\cup S^{-1}$. If the walk $(s_1 N, s_2 N, \ldots,s_n N)$ in $\Cay(G/N;SN/N)$ is closed, then its \defin{voltage} is the product $\vol(C) = s_1 s_2 \cdots s_n $. This is an element of $N$. In particular, if $C = (\overline{s}_1,\overline{s}_2,\ldots,\overline{s}_n)$ is a hamiltonian cycle in $\Cay(\overline{G};\overline{S})$, then $ \vol(C) = s_1 s_2\cdots s_n $. 

Note that it is possible that there are elements $s,t\in S$ such that $sN = tN$, so the voltage indeed depends on the elements $s_i$, not just on the cosets $s_iN$.
\end{notation}

\begin{fgl}[{\cite[Section 2.2]{Twelve}}] \label{FGL}
Let $G = \langle S \rangle$ and let $N$ be a cyclic normal subgroup of $G$. If $C=(s_1N, \dots s_nN)$ is a hamiltonian cycle in $\Cay(G/N;SN/N)$ such that $\vol(C)$ generates $N$, then there is a hamiltonian cycle in $ \Cay(G;S) $.
% Suppose:
% \begin{itemize}[leftmargin=*]
%     \item $ S $ is a generating set of $ G $,
%     \item $ N $ is a cyclic normal subgroup of $ G $,
%     \item $ \overline{G} = G/N $,
%     \item $ C =(\overline{s_{1}},\overline{s_{2}},\ldots,\overline{s_{n}})$ is a hamiltonian cycle in $ \Cay(G/N;\overline{S}) $, and
%     \item the voltage $ \vol(C) $  generates $ N $.
% \end{itemize}
% Then there is a hamiltonian cycle in $ \Cay(G;S) $.
\end{fgl}

\begin{corollary}[{\cite[Corollary 2.3]{Sixth}}] \label{corollary 5.2}
Let $G = \langle S \rangle$ and let $N$ be a normal subgroup of $G$ of prime order. Assume that there are $s,t \in S$ such that $sN = tN$. If there is a hamiltonian cycle in $\Cay(G/N;SN/N)$ which uses at least one edge corresponding to the generator $sN$, then there is a hamiltonian cycle in $ \Cay(G;S) $.
% Suppose:
% \begin{itemize}[leftmargin=*]
%     \item $ S $ is a generating set of $ G $,
%     \item $ N $ is a normal subgroup of $ G $, such that $ |N| $ is prime,
%     \item $ sN = tN $ for some distinct $ s,t\in S\cup S^{-1} $, and
%     \item there is a hamiltonian cycle in $ \Cay(G/N;\overline{S}) $ that uses at least one edge labeled~$ \overline{s} $.
% \end{itemize}
% Then there is a hamiltonian cycle in $ \Cay(G;S) $.
\end{corollary}

%\textcolor{red}{FL: I rewrote the two lemmas above because we fixed $\overline G$ to be $G/G'$; unsure if the $\overline a$ in the lemma below is a projection to the quotient $G/G'$, but in either case we should be precise.} 
%{\color{blue} Done.}
\begin{lemma} [{}{\cite[Lemma 2.16]{Farzad}}] \label{lemma 2.13} Assume $G = (\mathbb{Z}_p\times\mathbb{Z}_q)\ltimes G' $, where $ G' = (\mathbb{Z}_r\times\mathbb{Z}_s) $, and $ p,q,r $ and $ s $ are distinct primes such that $\mathcal (\mathbb{Z}_r\times \mathbb{Z}_s) \cap Z(G) = \{e\}$. If $ |\overline{a}| = pq $, then $ |a| = pq $.
\end{lemma}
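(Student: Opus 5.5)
Since $|\overline a| = pq$ and $\overline a$ is the image of $a$ in $\overline G = G/G'$, the order of $a$ is $pq$ times the order of the element $a^{pq}\in G'$. So the plan is to show $a^{pq}=e$. The group $G'=\mathbb{Z}_r\times\mathbb{Z}_s$ is cyclic of squarefree order $rs$. Hence $G'$ splits as its $r$-part and its $s$-part, both characteristic and therefore normal in $G$. It suffices to show that the $r$-component and the $s$-component of $a^{pq}$ are trivial. I treat the $r$-component; the $s$-case is symmetric.

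\textbf{Step 1: locate $a^{pq}$.} Write $a = x g$ with $x \in \mathbb{Z}_p\times\mathbb{Z}_q$ (a complement, possibly after conjugating) and $g\in G'$. Since $|\overline a| = pq$, the image of $x$ in $\overline G \cong \mathbb{Z}_p\times\mathbb{Z}_q$ has order $pq$. As $x$ lies in the complement, $|x| = pq$, so $x$ generates the complement $\mathbb{Z}_p\times\mathbb{Z}_q$. Let $\phi$ be the automorphism of $G'$ given by conjugation by $x$. Then
\[
a^{pq} = (xg)^{pq} = x^{pq}\cdot \phi^{-(pq-1)}(g)\cdots\phi^{-1}(g)\, g = \prod_{i=0}^{pq-1}\phi^{-i}(g),
\]
using that $G'$ is abelian. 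Thus $a^{pq}$ equals the norm $N_\phi(g) = \sum_{i=0}^{pq-1}\phi^i(g)$, written additively.

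\textbf{Step 2: use the hypothesis $G'\cap Z(G)=\{e\}$.} Restrict to the $r$-part $\mathbb{Z}_r$, on which $\phi$ acts as multiplication by some unit $\lambda\in\mathbb{Z}_r^\times$. Because $G = \langle x\rangle\ltimes G'$ and $G'$ is abelian, an element of $\mathbb{Z}_r$ is central exactly when it is fixed by $\phi$. If $\lambda=1$, then all of $\mathbb{Z}_r$ would be central, contradicting $G'\cap Z(G)=\{e\}$. Hence $\lambda\neq 1$. Since $\phi^{pq}=\mathrm{id}$, we have $\lambda^{pq}=1$. The $r$-component of the norm is therefore
\[
\Big(\sum_{i=0}^{pq-1}\lambda^i\Big) g_r = \frac{\lambda^{pq}-1}{\lambda-1}\, g_r = 0 \pmod r .
\]
The same argument applies to the $s$-part, so $a^{pq}=e$ and $|a| = pq$.

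\textbf{Main obstacle.} The only delicate point is justifying the decomposition $a = xg$ with $x$ of order exactly $pq$ in a complement. This follows from Schur--Zassenhaus, since $\gcd(pq, rs)=1$ and the given semidirect product supplies the complement. I would also verify carefully the claim that the fixed points of $\phi$ on $G'$ coincide with $G'\cap Z(G)$; this holds because $G$ is generated by $x$ together with the abelian group $G'$.
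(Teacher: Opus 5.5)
Your proof is correct. The paper gives no proof of this lemma; it is quoted from \cite[Lemma 2.16]{Farzad}, so there is no in-text argument to compare against.

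The step you flag as the main obstacle is not one. The hypothesis $G=(\mathbb{Z}_p\times\mathbb{Z}_q)\ltimes G'$ already provides the complement $K=\mathbb{Z}_p\times\mathbb{Z}_q$ with $G=KG'$ and $K\cap G'=\{e\}$. So the factorization $a=xg$ is immediate, and the projection $K\to G/G'$ is an isomorphism, which gives $|x|=pq$. Neither Schur--Zassenhaus nor any conjugation is needed.

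Your norm computation can also be collapsed to one line. Since $|\overline G|=pq=|\overline a|$, we have $G=\langle a\rangle G'$. The element $a^{pq}$ lies in $G'$, commutes with $a$, and commutes with all of $G'$ because $G'$ is abelian. Hence $a^{pq}\in G'\cap Z(G)=\{e\}$, and $|a|=pq$.

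Your argument is a coordinatized form of this. The norm $N_\phi(g)$ is automatically $\phi$-invariant. Your check that $\lambda\neq 1$ on each prime component is exactly the statement that the $\phi$-fixed points of $G'$ are trivial, and you correctly identify those fixed points with $G'\cap Z(G)$.

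The short version shows that only three facts matter: $G'$ is abelian, $\overline a$ generates $\overline G$, and $G'\cap Z(G)=\{e\}$. The splitting into $r$- and $s$-parts and the geometric-series evaluation are harmless but unnecessary.
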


The following proposition demonstrates that we can assume $ |G'| $ in \Cref{2pqr} is a product of two distinct odd prime numbers.
Because if $\mathbb{Z}_2\subseteq G'$, then $G'\cong \mathbb{Z}_{\ell}\times \mathbb{Z}_2$, where $\ell\in\{p,q,r,pq,rq,pr\}$.
Then the quotient $G/\mathbb{Z}_{\ell}$ is isomorphic to $\mathbb{Z}_{m}\ltimes \mathbb{Z}_2$, where $|G|=2\ell m$.
Since $\mathbb{Z}_{m}$ is normal, we have $G/\mathbb{Z}_{\ell}\cong \mathbb{Z}_{m}\times \mathbb{Z}_2$.
\begin{proposition}[{\cite[Proposition 2.22]{Farzad}}] \label{proposition 5.4}
Assume $ |G| = 2pqr $, where $ p $, $ q $ and $ r $ are distinct odd prime numbers. Now, if $ |G'| \in \{1,pqr\} $ or $ |G'| $ is prime, then every connected Cayley graph on $ G $ has a hamiltonian cycle.
\end{proposition}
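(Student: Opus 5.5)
Proposition~\ref{proposition 5.4} is quoted from \cite[Proposition 2.22]{Farzad}, so the most economical route is to cite it. To sketch a self-contained argument, I would split according to $|G'|$, since $G/G'$ is abelian and $G'$ is well controlled in these cases.

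\emph{Case $|G'|=1$.} Then $G$ is abelian and Lemma~\ref{abelain group} applies directly.

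\emph{Case $|G'|$ prime, say $G'\cong\mathbb{Z}_\ell$.} Here $G'$ is a cyclic normal subgroup of prime order, and I would use the Factor Group Lemma~\ref{FGL}.
\begin{enumerate}[(i)]
\item Since $\overline{G}$ is abelian, $\Cay(\overline{G};\overline{S})$ is hamiltonian by Lemma~\ref{abelain group}.
\item If two elements of $S$ lie in the same coset of $G'$, Corollary~\ref{corollary 5.2} finishes the case, provided some hamiltonian cycle uses that generator. Such a cycle can be arranged in abelian Cayley graphs.
\item Otherwise it suffices to find a hamiltonian cycle in $\Cay(\overline{G};\overline{S})$ whose voltage is nontrivial. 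Because $|G'|$ is prime, nontrivial already means the voltage generates $G'$.
\item To produce such a cycle, take a standard cycle $C$ in the abelian quotient, for instance $(\overline a^{m-1},\overline b, \overline a^{-(m-1)},\overline b,\dots)$ in the two-generator case. Then modify $C$ locally by a ``square switch'' that replaces one $(a,b,a^{-1})$ pattern. This changes the voltage by a conjugate of $[a,b]^{\pm1}$.
\item Since $G'=\langle [a,b]\rangle$ up to conjugation, and $G'\cap Z(G)$ is trivial or $G'$ is central, the two cycles $C$ and its switched version cannot both have trivial voltage. Hence one of them works.
\end{enumerate}

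\emph{Case $|G'|=pqr$.} Now $G'$ is the odd-order part, $G\cong\mathbb{Z}_2\ltimes\mathbb{Z}_{pqr}$, and $\overline G\cong\mathbb{Z}_2$. $G'$ is cyclic of order $pqr$ (it is a group of squarefree order with cyclic quotient, and the structure forces it to be cyclic). The key observation is that every element of $S$ outside $G'$ is an involution inverting $G'$, so $G$ is dihedral-like. Any two generators $t,u$ outside $G'$ satisfy $tu\in G'$, and $\Cay(G;S)$ contains a ladder/prism structure. I would take the hamiltonian cycle $(t,u)^{|G|/2}$ when $\langle tu\rangle=G'$. If instead $\langle tu\rangle$ is proper, I would combine it with generators lying in $G'$ and use the Factor Group Lemma~\ref{FGL} with respect to the quotient by $\langle tu\rangle$ or by a prime-order normal subgroup.

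The main obstacle is item (v) in the prime case. One must check that the voltage changes by a generator of $G'$ and not just possibly trivially, which needs care when $G'$ meets $Z(G)$. I would handle this via the dichotomy of $G'$ being central (then the voltage is a power of $[a,b]$ computed explicitly mod $\ell$ by counting) or centerless (then conjugates of $[a,b]$ are nontrivial).
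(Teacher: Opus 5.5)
You are right that the paper gives no proof here: it imports the result from Maghsoudi's Proposition~2.22 \cite{Farzad}, so your opening sentence is exactly the paper's route and suffices. The self-contained sketch you add, however, does not work, and the failure is in step~(iv), not step~(v).

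\textbf{The square switch in (iv).} In the abelian quotient, the path $(a,b,a^{-1})$ from $x$ runs along three sides of the square $R$ with vertices $x,\ xa,\ xab,\ xb$. Along the sides of $R$ it is the only path from $x$ to $xb$ covering all four vertices, so there is nothing to replace it by. A genuine switch $C\mapsto C\triangle R$ behaves in one of three ways:
\begin{enumerate}[(1)]
\item If $C$ uses three sides of $R$, two vertices are left uncovered. This happens at every turn of your cycle $(\overline a^{m-1},\overline b,\overline a^{-(m-1)},\overline b,\dots)$.
\item If $C$ uses exactly the two $a$-sides in opposite directions, $C\triangle R$ splits into two cycles. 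This happens for the squares between consecutive rows of that cycle.
\item If $C$ uses exactly the two $a$-sides in the same direction, $C\triangle R$ is hamiltonian, but a whole arc of $C$ is reversed.
\end{enumerate}
In case (3), write $C=(a,P,a,Q)$ starting at $x$, so that $C'=C\triangle R=(b,P^{-1},b,Q)$. Identify $P$ with the product of its labels and put $n=b^{-1}aP\in G'$. Then
\[
\operatorname{Volt}(C')\operatorname{Volt}(C)^{-1}
 = b\,n^{-1}b^{-1}\,(aba^{-1})\,n^{-1}b^{-1}
 = [a,b]\cdot b\,n^{-2}\,b^{-1}.
\]
This ratio involves the uncontrolled voltage defect $n$ of the reversed arc, and it is trivial for exactly one value of $n$. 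So you cannot conclude that one of $C,C'$ has non-trivial voltage. The ``central'' branch of your dichotomy is vacuous anyway, since $G'\cap Z(G)=\{e\}$ for square-free $|G|$. The prime case is really Durnberger's theorem, extended by Keating and Witte to cyclic $G'$ of prime-power order. Its proof needs a far more careful choice of quotient cycles than a single switch, so it should be cited rather than sketched this way.

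\textbf{The case $|G'|=pqr$.} Your reduction to $G\cong D_{2pqr}$ is correct. So are the two-generator cases: two reflections $t,u$ must satisfy $\langle tu\rangle=G'$, giving $(t,u)^{pqr}$, and a reflection with a rotation forces the rotation to generate $G'$, giving a prism. For $|S|\ge3$, however, you give no argument.
\begin{itemize}
\item A single reflection with several rotations is not addressed. Take $S=\{t,c_1,c_2\}$ with $|c_1|=pq$ and $|c_2|=r$. Here the Factor Group Lemma over $\langle c_1\rangle$ is useless: $\overline{c_1}$ is trivial in the quotient, so every quotient hamiltonian cycle lifts to a word in $t,c_2$. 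Its voltage then lies in $\langle t,c_2\rangle\cap\langle c_1\rangle=\{e\}$. This case needs Lemma~\ref{lemma 5.6} with $s=c_2$ instead.
\item For sets of reflections such as $\{t,u,v\}$ with $tu$, $tv$, $uv$ of orders $pq$, $qr$, $pr$, the subgroup $\langle tu\rangle$ does not have prime order. So Corollary~\ref{corollary 5.2} is unavailable, and you must actually exhibit a quotient cycle whose voltage generates $\langle tu\rangle$.
\end{itemize}

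As a proof by citation your proposal coincides with the paper. As a self-contained proof it has genuine gaps in both non-abelian cases.
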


The following lemmas show that some special Cayley graphs have a hamiltonian cycle, and we use these facts in \Cref{ch:6} in order to prove our main result. 

 \begin{lemma}{\rm \cite[Lemma 2.23]{Farzad}}  \label{lemma 5.16.}
Assume $ G = (\mathbb{Z}_{2}\times\mathbb{Z}_r)\ltimes G' $, and $ G' = \mathbb{Z}_p\times\mathbb{Z}_q $, where $ p $, $ q $ and $ r $ are distinct prime numbers and let $ S = \{a,b\} $ be a generating set of $ G $. Additionally, assume $ |\overline{b}| \in \{2,2r\} $, $ |\overline{a}| = r $ and $ \gcd(|a|,r-1) = 1 $. Then $ \Cay(G;S) $ contains a hamiltonian cycle.
\end{lemma}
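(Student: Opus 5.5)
Since $G=(\mathbb{Z}_2\times\mathbb{Z}_r)\ltimes G'$ with $G'=\mathbb{Z}_p\times\mathbb{Z}_q$, the quotient $\overline G\cong\mathbb{Z}_{2r}$. The plan is to find a hamiltonian cycle in $\Cay(\overline G;\{\overline a,\overline b\})$ whose voltage generates $G'$, and then apply the Factor Group Lemma~\ref{FGL}; $G'$ is cyclic because $p\neq q$.

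\emph{Orders of $a$ and $a^r$.} Since $|\overline a|=r$, we have $a=a_r\gamma$ for some $a_r$ of order $r$ and some $\gamma\in G'$, and $|a|\in\{r,rp,rq,rpq\}$. The hypothesis $\gcd(|a|,r-1)=1$ says that neither $p$ nor $q$ divides $r-1$ whenever it divides $|a|$. If $p$ divides $|a|$, then some element of order $p$ commutes with $a$. An automorphism of $\mathbb{Z}_p$ of order $r$ requires $r\mid p-1$, so $a_r$ either centralizes $\mathbb{Z}_p$ or acts on it fixed-point-freely. Using this, I expect to show that $a^r\in G'$ is a generator of the centralizer of $a$ in $G'$.

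\emph{Candidate cycles.} With $\overline a$ of order $r$ and $\overline b$ generating $\overline G$ together with it, the group $\overline G$ is $\mathbb{Z}_2\times\mathbb{Z}_r$ with $\overline b$ of order $2$ or $2r$. This gives the standard prism-like hamiltonian cycles
\[
C_k=\bigl(a^{r-1},b,a^{-(r-1)},b^{-1}\bigr)
\]
when $|\overline b|=2$, together with variants that travel $k$ steps along $a$ before switching. When $|\overline b|=2r$, I would use analogous cycles such as $(b^{2r-1},a)$ or $(a^{-1},b^{...})$. I would compute the voltages in $G'$ in the form $a^{r-1}ba^{-(r-1)}b^{-1}$, which equals a commutator-type product $[a^{r-1},b]$ when $|\overline b|=2$. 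Writing $b=b_0\delta$ with $b_0\in\mathbb{Z}_2\times\mathbb{Z}_r$ and $\delta\in G'$, I would project the voltage onto $\mathbb{Z}_p$ and onto $\mathbb{Z}_q$ separately and check that neither projection is trivial.

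\emph{Main obstacle.} The difficulty is ensuring that, for some choice of cycle, both the $\mathbb{Z}_p$- and $\mathbb{Z}_q$-components of the voltage are nontrivial. The voltages of the family $C_k$ should depend on $k$ through expressions of the form $1+\alpha+\cdots+\alpha^{k}$ in $\mathbb{Z}_p$, where $\alpha$ is the action of $a$ (or of $a^{-1}b$). Each component can vanish for at most one value of $k$ in $\{1,\dots,r-1\}$, except in degenerate cases where $\alpha$ has order dividing $r$. Those degenerate cases are exactly what the hypothesis $\gcd(|a|,r-1)=1$ and the conditions $Z(G)\cap G'$ nontrivial, or $a$ centralizing, are meant to control. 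If a component vanishes for every cycle in the family, I would fall back on the Corollary~\ref{corollary 5.2} route: pass to $G/\mathbb{Z}_p$, where $\overline a$ and $\overline b$ may collide, or lift in two stages, first through $\mathbb{Z}_q$ and then through $\mathbb{Z}_p$, applying the Factor Group Lemma~\ref{FGL} each time. Since $r\ge 5$ gives at least $4$ choices of $k$ and at most $2$ are excluded, the counting argument should succeed.
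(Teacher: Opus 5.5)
There is a genuine gap. You picked the right cycle, but you never prove the one thing the lemma needs, namely that its voltage generates $G'$. The missing idea is short.

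The walk $(\overline a^{r-1},\overline b,\overline a^{-(r-1)},\overline b^{-1})$ is a hamiltonian cycle in $\Cay(\overline G;\overline S)$ whether $|\overline b|=2$ or $|\overline b|=2r$, since it only uses $\overline b\notin\langle\overline a\rangle$. Its voltage is exactly $a^{r-1}ba^{-(r-1)}b^{-1}=[a^{r-1},b]$ in both cases. The hypothesis $\gcd(|a|,r-1)=1$ means that $a$ is a power of $a^{r-1}$, so $\langle a^{r-1},b\rangle=G$. Since $G'\cong\mathbb Z_p\times\mathbb Z_q$ is cyclic, \Cref{lemma 5.13.} (that is, \Cref{lemma 5.12} applied to the generating pair $a^{r-1},b$) gives $G'=\langle[a^{r-1},b]\rangle$. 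The Factor Group Lemma~\ref{FGL} then finishes the proof. This is why that hypothesis is in the statement. You need no componentwise analysis in $\mathbb Z_p$ and $\mathbb Z_q$, no family of cycles, and no separate treatment of $|\overline b|=2r$.

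As written, your proposal replaces this step with a heuristic that is not a proof:
\begin{itemize}
\item The family $C_k$ is never defined; the displayed $C_k$ does not involve $k$.
\item The claim that each component of the voltage vanishes for at most one $k$ is asserted rather than derived.
\item The final count relies on $r\ge5$, which is not a hypothesis of the lemma ($r=3$ is allowed).
\item The discussion of $a^r$ and the centralizer of $a$ in $G'$ is never used.
\item Your suggested cycle $(b^{2r-1},a)$ for $|\overline b|=2r$ is not even closed in $\overline G$, since closure would force $\overline a=\overline b$.
\end{itemize}
The fallbacks also fail. \Cref{corollary 5.2} requires two generators with the same image modulo $N$. That cannot happen here, because $\overline a$ and $\overline b^{\pm1}$ have different orders in $\overline G$. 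Lifting the same cycle in two stages, first from $G/G'$ to $G/\mathbb Z_p$ and then to $G$, imposes exactly the original condition: the voltage must have nontrivial projection to both $\mathbb Z_p$ and $\mathbb Z_q$. So it gains nothing.
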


%\textcolor{red}{FL: perhaps rewrite this without bullet points as well}
%{\color{blue}Done.}
\begin{lemma}[cf.~{\cite[Case 2 of proof of Theorem 1.1, pages 3619--3620]{Sixth}}] \label{lemma5.10}
Assume that
\[
G = (\mathbb{Z}_{2}\times\mathbb{Z}_r)\ltimes (\mathbb{Z}_p\times\mathbb{Z}_q),
\]
that $|S| = 3$, that $\widehat{S}$ is a minimal generating set of $\widehat{G} = G/\mathbb{Z}_p$, that $\mathbb{Z}_r$ centralizes $\mathbb{Z}_q$, and that $\mathbb{Z}_{2}$ inverts $\mathbb{Z}_q$. Then, $\Cay(G;S)$ contains a hamiltonian cycle.
\end{lemma}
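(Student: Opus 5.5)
The plan is to pass to the quotient $\widehat{G} = G/\mathbb{Z}_p$ and use the Factor Group Lemma~\ref{FGL}. Since $\mathbb{Z}_r$ centralizes $\mathbb{Z}_q$ and $\mathbb{Z}_2$ inverts it, we have $\widehat{G} \cong (\mathbb{Z}_2\times\mathbb{Z}_r)\ltimes\mathbb{Z}_q \cong \mathbb{Z}_r \times D_{2q}$. Because $|\mathbb{Z}_p| = p$ is prime, it suffices to find a hamiltonian cycle $C$ in $\Cay(\widehat{G};\widehat{S})$ whose voltage $\vol(C)$ is nontrivial. Such a voltage automatically generates $\mathbb{Z}_p$.

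\textbf{Step 1: reduce to distinct images.} If two elements of $S$ have the same image in $\widehat{G}$, then $\widehat{S}$ cannot be a minimal generating set of size $3$. So we may assume $|\widehat{S}| = 3$. If some element of $S$ lies in $\mathbb{Z}_p$, minimality is again violated. In any case, Corollary~\ref{corollary 5.2} would handle coincident images directly.

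\textbf{Step 2: classify minimal generating sets.} Next I classify the minimal $3$-element generating sets $\widehat{S} = \{\widehat a,\widehat b,\widehat c\}$ of $\mathbb{Z}_r\times D_{2q}$ up to automorphism. Since $D_{2q}$ is $2$-generated and $\mathbb{Z}_r$ is central, minimality forces the following. The projections to $D_{2q}$ must generate $D_{2q}$, and the $\mathbb{Z}_r$-parts must contribute exactly one extra generator. Typical cases are:
\begin{enumerate}[(i)]
\item $\widehat a$ of order $r$ (central), with $\widehat b,\widehat c$ two reflections of $D_{2q}$;
\item $\widehat a = z f$, with $z$ central of order $r$ and $f$ a reflection, where $\widehat b,\widehat c$ are chosen so that no two of the three elements generate $\widehat G$;
\item variants in which one generator is a rotation of order $q$ (or $qr$).
\end{enumerate}

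\textbf{Step 3: build cycles in each case.} For each case I write down explicit hamiltonian cycles using the product structure. In case (i), for instance, $\Cay(D_{2q};\{\widehat b,\widehat c\})$ is a $2q$-cycle alternating $\widehat b,\widehat c$. A hamiltonian cycle of the prism-like product with the $r$-cycle generated by $\widehat a$ is then obtained by the standard zig-zag: traverse $(\widehat b,\widehat c)^{q-1}\widehat b$ along one layer, step by $\widehat a$, traverse back on the next layer, and so on; since $r$ is odd, the ends must be adjusted.

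\textbf{Step 4: force a nontrivial voltage (main obstacle).} The main obstacle is to guarantee that some such cycle has nontrivial voltage. The standard trick I would try first is to produce two hamiltonian cycles $C_1, C_2$ that differ only on a small square or hexagon, for example replacing a segment $(\widehat a,\widehat b,\widehat a^{-1})$ by $(\widehat b)$-type detours. Their voltages then differ by a conjugate of a short relator word such as $[a,b]$ or $(bc)^{q}$ computed in $G$. If both voltages were trivial, this relator would be trivial in $G$. Since $G' \supseteq \mathbb{Z}_p$ (otherwise $\mathbb{Z}_p$ would be central and a direct factor, and any nontrivial voltage argument is easier), I expect to get a contradiction. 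Concretely, I would show that the relator is a commutator lying in $\mathbb{Z}_p$ that is nontrivial because $\mathbb{Z}_p \cap Z(G)$ is trivial. The remaining effort is bookkeeping for each case of Step 2, with minor modifications when $r$ or $q$ forces parity adjustments.
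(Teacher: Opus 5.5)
The paper gives no proof of this lemma; it is quoted from Ghaderpour--Morris \cite{Sixth}. So your proposal has to stand on its own, and it has a genuine gap in Step~4, which carries all the content.

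\textbf{The square-switch comparison does not work.} Let $Q=(x,xa,xab,xb)$ be a $4$-cycle of $\Cay(\widehat G;\widehat S)$ (so $[a,b]\in\mathbb Z_p$), and suppose its two $a$-edges lie on a hamiltonian cycle $C$. If $C$ runs $x\to xa$ and $xab\to xb$, then $C\triangle Q$ splits into two cycles. So, up to reversing $C$, the only useful configuration is $C=(a,w_1,a,w_2)$ from $x=e$, where $w_1$ leads from the vertex $a$ to the vertex $b$ and $w_2$ leads from $ba$ back to $e$ (modulo $\mathbb Z_p$). Then $C\triangle Q=(b,w_1^{-1},b,w_2)$, which traverses one arc \emph{backwards}. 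Put $n_1=aw_1b^{-1}\in\mathbb Z_p$ and $m=baw_2\in\mathbb Z_p$, with all words evaluated in $G$. One computes
\[
\operatorname{Volt}(C)=n_1m,\qquad \operatorname{Volt}(C\triangle Q)=n_1^{-1}[a,b]\,m .
\]
So the two voltages do not differ by a conjugate of $[a,b]$. Both being trivial only forces $[a,b]=n_1^{2}$, which is no contradiction.

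A usable comparison must do one of two things. It can replace a segment of $C$ by another segment with the same end points and the same vertex set, as in Case~2 of the proof of \Cref{2pqr}. Or it can compute the voltages of explicit cycles directly. In either case you must then show that the resulting expressions in the exponents of the action on $\mathbb Z_p$ cannot all vanish modulo $p$. Your Step~3 produces no explicit cycles, so none of this is done.

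\textbf{The nontriviality claim is also unjustified.} The condition $\mathbb Z_p\cap Z(G)=\{e\}$ does not make $[a,b]$ nontrivial. Done correctly, Step~2 gives only two possibilities: $\widehat S=\{z,f,f'\}$ or $\widehat S=\{z,f,\rho\}$, with $z$ central of order $r$, $f\neq f'$ reflections, and $\rho$ a rotation. An element with both components nontrivial, such as $zf$ or an element of order $qr$, together with a suitable second element of $\widehat S$ already generates $\widehat G\cong\mathbb Z_r\times D_{2q}$, since $\mathbb Z_r$ and $D_{2q}$ have no common nontrivial quotient; so it cannot lie in a minimal $3$-element set.

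Now suppose the lift of $z$ is $a=a_r$, that $\mathbb Z_r$ centralizes $\mathbb Z_p$, and that $\mathbb Z_2$ inverts $\mathbb Z_p$. Then $a\in Z(G)$ and every commutator $[a,\cdot\,]$ vanishes, even though $\mathbb Z_p\cap Z(G)=\{e\}$. This case must be split off: here $G\cong\mathbb Z_r\times D_{2pq}$, so \Cref{lemma 5.8} applies. In the remaining cases, the correct reason that some $[a,s]$ with $s\in S$ is nontrivial is that otherwise $a$ would be central.
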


\begin{lemma}{\rm \cite[Lemma 2.9]{Sixth}} \label{lemma 5.8}
If $ G = D_{2pq} \times \mathbb{Z}_r $, where $ p, q $ and $ r $ are distinct odd primes, then every connected Cayley graph on $ G $ has a hamiltonian cycle.
\end{lemma}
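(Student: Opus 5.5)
We may assume that $S$ is a minimal generating set, since a hamiltonian cycle in the Cayley graph of a generating subset is also one in $\Cay(G;S)$. Write elements of $G = D_{2pq}\times\mathbb{Z}_r$ as pairs $(d,z)$. Then $G' = \mathbb{Z}_p\times\mathbb{Z}_q \cong \mathcal{C}_{pq}$ is the rotation subgroup of $D_{2pq}$. It is cyclic and normal, and $\overline{G} = G/G' \cong \mathbb{Z}_2\times\mathbb{Z}_r \cong \mathcal{C}_{2r}$. The plan is to apply the Factor Group Lemma~\ref{FGL} with $N=G'$. For this we need a hamiltonian cycle $C$ in $\Cay(\overline{G};\overline{S})$ whose voltage generates $G'$. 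Since $G'$ is cyclic of order $pq$, this means exactly that the projections of $\vol(C)$ to $\mathbb{Z}_p$ and to $\mathbb{Z}_q$ are both nontrivial. If two generators have the same image in the quotient by a normal subgroup of prime order ($\mathbb{Z}_p$ or $\mathbb{Z}_q$), we would instead use Corollary~\ref{corollary 5.2} on $G/\mathbb{Z}_p \cong D_{2q}\times\mathbb{Z}_r$, resp.\ $G/\mathbb{Z}_q$. The required hamiltonian cycle there comes from Theorem~\ref{theorem 1.2}(2), and it can be chosen to use the doubled generator.

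The main case is $S=\{a,b\}$. Since $\overline{G}$ is cyclic of order $2r$ and $S$ generates, at least one of $\overline a,\overline b$ has even order, so at least one of $a,b$ has a reflection in its $D_{2pq}$-coordinate. I would split into cases according to which of $a,b$ have a reflection coordinate and according to $|\overline a|,|\overline b| \in\{1,2,r,2r\}$. In $\mathcal{C}_{2r}$ there are the standard families of hamiltonian cycles
\[
C_k = \bigl(a^{k}, b, a^{-(m-k)}, b, \dots\bigr),
\]
which alternate runs of $a$ and $b$ along the cosets of $\langle\overline a\rangle$, together with the cycle $(a^{2r})$ when $|\overline a|=2r$. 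The voltage of each such cycle is an explicit word; for example $a^{2r}$, $a^{2r-1}b^{-1}$, or $a^{k}ba^{-(m-k)}b^{-1}$. Using that a reflection inverts every rotation, I would compute each voltage as a rotation $\rho^{f(k)}$, where $f$ is a linear or affine function of $k$ in $\mathbb{Z}_{pq}$ whose coefficient is the rotation part of the generator with a rotation component.

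The key step, and the one I expect to be the main obstacle, is to show that some admissible $k$ makes $f(k)$ nonzero modulo $p$ and modulo $q$ simultaneously. Each condition excludes at most one residue class of $k$, and we are free to vary $k$ over an interval of length about $r$ (or $2r$). With $p,q,r$ odd and distinct, two excluded residues cannot cover all the available values, except possibly in degenerate situations where $f$ is constant. The typical degenerate situation is when a generator is a pure reflection times an element of $\mathbb{Z}_r$, so that $a^{2r}=e$. In that case I would switch to a different cycle shape, such as the cycle $(a,b^{-1},a,b,\dots)$ in which the orientation of the $b$-edges alternates. Its voltage has the form $\rho^{\pm 2t}$, where $\rho^{t}$ is the rotation part of $b$, and this generates $G'$ because $b$ together with a reflection must generate $D_{2pq}$.

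If $|S|=3$ (which minimality allows, e.g.\ a reflection, a rotation of order $p$, and a rotation of order $q$, decorated by $\mathbb{Z}_r$-components), I would pass to the quotient $G/\mathbb{Z}_p$. Again Theorem~\ref{theorem 1.2}(2) gives hamiltonian cycles there. Either Corollary~\ref{corollary 5.2} applies, or I would modify one cycle locally by a standard switch (replacing an edge path $s,t$ by $t,s$ on a commuting square). This changes the voltage by a commutator of $\mathbb{Z}_p$-type and so makes the voltage nontrivial in $\mathbb{Z}_p$, and the Factor Group Lemma~\ref{FGL} with $N=\mathbb{Z}_p$ then finishes the argument.
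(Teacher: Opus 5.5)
The paper does not prove this lemma; it quotes it from \cite{Sixth}. So your argument has to stand on its own, and there is a genuine gap: its central step, the Factor Group Lemma with $N=G'$ in the two-generator case, fails for some generating sets.

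\textbf{The failing case.} Let $\rho$ be a rotation of order $pq$, $\sigma$ a reflection, and $z$ a generator of $\mathbb{Z}_r$. Take $S=\{a,b\}$ with $a=\rho z$ (of order $pqr$) and $b=\sigma$. Then $\overline a$ has order $r$, $\overline b$ has order $2$, and $\Cay(\overline G;\overline S)$ is the prism over an $r$-cycle.

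Since $r$ is odd, every hamiltonian cycle of this prism uses exactly two adjacent rungs:
\begin{itemize}
\item If all $r$ rungs were used, the rim edges in each layer would form a perfect matching of an odd cycle, which is impossible.
\item If some rung is unused, all rim edges out to the nearest used rungs on either side are forced. Together with those two rungs they close up a cycle, and that cycle is hamiltonian only when the two rungs are adjacent.
\end{itemize}
Hence every hamiltonian cycle is a translate or reversal of $(\overline a^{r-1},\overline b,\overline a^{-(r-1)},\overline b)$. Up to conjugation and inversion its voltage is $a^{r-1}ba^{-(r-1)}b=\rho^{2(r-1)}$. So when $p\mid r-1$ or $q\mid r-1$ (for example $p=5$, $q=7$, $r=11$), no hamiltonian cycle of $\Cay(\overline G;\overline S)$ has voltage generating $G'$.

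In this case there is no parameter $k$ to vary. Your fallback does not help either: $(\overline b,\overline a^{-1},\overline b,\overline a)$ closes after four steps, so $\rho^{\pm 2t}$ is the voltage of a $4$-cycle, not of a hamiltonian cycle. A different idea is needed. For example, $\langle a\rangle$ has index $2$ and $b^{-1}ab=a^{\ell}$ with $\ell\equiv-1\pmod{pq}$ and $\ell\equiv1\pmod r$. By Lemma~\ref{lem:gp-cayley-copy}, $\Cay(G;S)$ is then the generalized Petersen graph $GP(pqr,\ell)$, and you can invoke Alspach's classification of hamiltonian generalized Petersen graphs. Alternatively, lift carefully chosen cycles from $G/\mathbb{Z}_p$, as in Section~\ref{sec:gp}. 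This is the same obstruction ($r\equiv1\pmod p$) that forces the generalized Petersen machinery in the paper's Case~1.

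\textbf{Other steps that fail as written.}
\begin{itemize}
\item If $|\overline a|=1$ (e.g.\ $a=\rho$, $b=\sigma z$), the only hamiltonian cycle of $\Cay(\overline G;\overline S)$ is $(\overline b^{2r})$, whose voltage is $b^{2r}=e$. This case needs Lemma~\ref{lemma 5.6}(2) instead.
\item In your $|S|\ge 3$ sketch, replacing a subpath $(s,t)$ by $(t,s)$ does not preserve hamiltonicity. It swaps the vertex $g\widehat s$ for $g\widehat t$, which the cycle already visits elsewhere. A valid local move is a symmetric difference with a short cycle followed by a check that one cycle remains, as in Proposition~\ref{prop:gp-cycles}.
\item You also omit minimal generating sets of size $4$, such as $\{\sigma,\rho^p,\rho^q,z\}$. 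For $|S|\ge 3$ the simplest fix is to cite Theorem~\ref{pqrs-3}, which covers groups of order $2pqr$.
\end{itemize}
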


\begin{lemma}{\rm \cite[Lemma 2.27]{Tenth}} \label{lemma 5.6} 
Let $ S $ generate the finite group $ G $, and let $ s \in S $, such that $ \langle s \rangle \triangleleft  G $. If $ \Cay(G/\langle s \rangle ;\overline{S}) $ has a hamiltonian cycle, and either 
\begin{enumerate}[leftmargin=*]
    \item $ s \in Z(G) $ \label{lemma 5.6 n1}, or
    \item $ Z(G)$  $ \cap$ $\langle s \rangle = \{e\}$, \label{lemma 5.6 n2}
\end{enumerate}
then $ \Cay(G;S) $ has a hamiltonian cycle.
\end{lemma}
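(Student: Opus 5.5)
We write $N=\langle s\rangle$, $n=|N|$, $m=|G/N|$, and fix a hamiltonian cycle $C=(t_1N,\dots,t_mN)$ in $\Cay(G/N;\overline S)$ with $t_i\in S\cup S^{-1}$. We may assume $t_i\notin N$, since $s$ itself gives loops in the quotient. Its voltage is $\vol(C)=t_1\cdots t_m=s^v\in N$. If $s^v$ generates $N$, the Factor Group Lemma~\ref{FGL} finishes the proof, so the plan is to modify the lift using the edges labelled $s$ inside the cosets of $N$ until a closed hamiltonian walk appears.

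\textbf{The zigzag lift.} Since $s\in S$, each coset $gN$ spans an $n$-cycle of $s$-edges in $\Cay(G;S)$. Entering a coset at some vertex, we can traverse all of it with $n-1$ consecutive steps labelled $s$, or with $n-1$ steps labelled $s^{-1}$; these walks end at the entry vertex times $s^{-1}$ or $s$, respectively. So for any signs $\varepsilon_i\in\{\pm1\}$ the walk
\[
(s^{-\varepsilon_1(n-1)},t_1,\;s^{-\varepsilon_2(n-1)},t_2,\;\dots,\;s^{-\varepsilon_m(n-1)},t_m)
\]
visits each coset exactly once, in the order prescribed by $C$, and covers it completely. Hence it visits every vertex of $G$ exactly once. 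It is a hamiltonian cycle precisely when its endpoint is $e$, i.e.
\[
s^{\varepsilon_1}t_1s^{\varepsilon_2}t_2\cdots s^{\varepsilon_m}t_m=e .
\]
Because $N$ is normal, we can push all $s$-factors to the left. Writing $g_i=t_1\cdots t_{i-1}$ and $g_isg_i^{-1}=s^{\beta_i}$ with $\beta_i$ a unit mod $n$, the condition becomes
\[
\sum_{i=1}^m \varepsilon_i\beta_i\equiv -v \pmod n .
\]
In both cases we will also allow ourselves to vary the entry/exit pattern. For instance, a coset can be left before it is fully covered and finished on a later visit, so that $C$ is traversed several times. This gives more freedom than the pure signs.

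\textbf{Case (1), $s$ central.} Here $\beta_i=1$ for all $i$, and the graph spanned by the $s$-edges together with the lifts of $C$ is a ``twisted prism'' $C_m\square C_n$. The twist is by $s^v$ when one goes once around $C$. I would give an explicit hamiltonian cycle of this twisted torus. The idea is to traverse $C$ once in each of the $n$ layers $g s^j$ and step between layers with $s$-edges. Then, when $v\not\equiv\pm1$ or the parity is wrong, switch to a boustrophedon pattern: go up a column of $s$-edges, move along a $C$-edge, go down the next column, and so on. Parity issues arise when $m$ and $n$ are both odd, and the twist can absorb them. This is the classical hamiltonicity of connected $2$-generated abelian-like quotient lifts, in the spirit of Lemma~\ref{abelain group}, and I expect a short case check on the parity of $m$ and on $v$ to suffice.

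\textbf{Case (2), $Z(G)\cap N=\{e\}$, the main obstacle.} Here some generator $t\in S$ acts on $N$ by a nontrivial automorphism $s\mapsto s^{\beta}$ with $\beta\not\equiv1$, on each prime component of $N$. This makes the coefficients $\beta_i$ genuinely varied. I would first rotate $C$ so that the edges labelled by such $t$ occur at convenient positions. Then I would show that the set of values $\{\sum\varepsilon_i\beta_i\}$, together with the choice of starting vertex inside each coset, covers all residues mod $n$. Equivalently, by changing which $s$-edge is omitted in one coset we can realise a voltage that generates $N$, and then apply Lemma~\ref{FGL}. The delicate point is handling each prime divisor of $n$ simultaneously. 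Flipping one sign changes the sum by $2\beta_i$, a unit, and an action fixing no nonidentity element guarantees that two such changes with different $\beta_i$ give differences that are nonzero modulo every prime dividing $n$. Verifying this covering argument carefully, including the case $n=2$ where signs are irrelevant, is the step I expect to require the most care.
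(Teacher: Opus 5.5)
The paper does not reprove this lemma; it quotes it from \cite{Tenth}. So I am judging your argument on its own merits.

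Your zigzag computation is correct, and Case~(1) is fine in substance. You can close it without any parity analysis: the $s$-edges together with the lifts of the edges of $C$ form a spanning subgraph isomorphic to $\Cay(A;\{(1,0),(0,1)\})$, where $A=(\mathbb Z\times\mathbb Z_n)/\langle(m,-v)\rangle$. This is a connected Cayley graph on an abelian group, so Lemma~\ref{abelain group} applies directly.

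Case~(2), however, has a genuine gap.
\begin{itemize}
\item In a one-pass zigzag the only freedom is the signs. A coset is an $n$-cycle of $s$-edges, so a path covering it from the entry vertex must exit at a neighbour of that vertex. The entry vertex of the next coset is therefore forced; it is not an extra parameter.
\item Hence the attainable endpoints are the at most $2^m$ elements $s^{v+\sum\varepsilon_i\beta_i}$, and your covering claim is false.
\item Concretely, take $G=D_{30}\times\mathbb Z_3$, with $s$ a rotation of order $15$, $t$ a reflection, $c$ central of order $3$, $u=cs^3$ and $S=\{s,t,u\}$. Then $Z(G)\cap\langle s\rangle=\{e\}$. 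The quotient cycle $(\overline u,\overline u,\overline t,\overline u^{-1},\overline u^{-1},\overline t)$ has $\beta=(1,1,1,-1,-1,-1)$ and voltage $s^{12}$. The sums $\sum\varepsilon_i\beta_i$ are the even integers in $[-6,6]$, and none is congruent to $-12\equiv 3\pmod{15}$. Since $\gcd(12,15)=3$, the Factor Group Lemma~\ref{FGL} does not apply either.
\item Your sentence ``Equivalently, \dots\ apply Lemma~\ref{FGL}'' conflates two mechanisms. The $s$-edges are loops in $\Cay(G/N;\overline S)$ and cannot change an FGL voltage. A zigzag already covers all of $G$ in one pass, so its endpoint must be exactly $e$, not merely a generator of $N$.
\item Two smaller points. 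The hypothesis $Z(G)\cap N=\{e\}$ does not give a single generator acting nontrivially on every prime component. And it never holds with $n=2$, since a normal subgroup of order $2$ is central.
\end{itemize}

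The missing idea has two parts: the interpolation you only gesture at (leaving a coset before it is full), and the place where hypothesis~(2) actually enters.

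First, since $N$ is abelian, $gsg^{-1}$ depends only on $gN$. So $\prod_i g_isg_i^{-1}=s^{\sum_i\beta_i}$ is independent of the transversal, hence invariant under conjugation by every element of $G$. Thus it lies in $Z(G)\cap N=\{e\}$, i.e.\ $\sum_i\beta_i\equiv0\pmod n$.

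Second, put $d=\gcd(v,n)=[N:\langle s^v\rangle]$ and $W=(s^{d-1},t_1,s^{d-1},t_2,\dots,s^{d-1},t_m)$.
\begin{itemize}
\item The endpoint of $W$ is $s^{(d-1)\sum\beta_i+v}=s^v$.
\item In the coordinates $s^jg_i$, $W$ visits in the $i$-th coset an arithmetic progression of length $d$ with step $\beta_i$. Because $\beta_i$ is a unit modulo $d$, this progression is a transversal of $\langle s^d\rangle=\langle s^v\rangle$.
\item The $n/d$ successive copies of $W$ are the left translates by powers of $s^v$. They therefore tile every coset, and $W^{n/d}$ is a hamiltonian cycle.
\end{itemize}
The extreme cases $d=1$ and $d=n$ are exactly your FGL step and your constant-sign zigzag. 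The intermediate values of $d$ are what your plan is missing.
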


\begin{lemma}[{}{\cite[Corollary 4.4]{Fifth}}] \label{lemma 5.12}
Assume $ G = \langle a,b \rangle $ and $G'$ is cyclic. Then~$ G' = \langle [a,b] \rangle $.
\end{lemma}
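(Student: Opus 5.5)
The plan is to show directly that $N = \langle [a,b] \rangle$ contains $G'$. Since $N \le G'$ trivially, this gives $G' = N$. The argument uses no Cayley graph machinery; it rests only on the fact that subgroups of a cyclic normal subgroup are automatically normal in the whole group.

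First I show that $N$ is normal in $G$. Since $G'$ is cyclic, for each divisor $d$ of $|G'|$ it has exactly one subgroup of order $d$. Hence every subgroup of $G'$ is characteristic in $G'$, because automorphisms preserve orders of subgroups. Now $G'$ is normal in $G$, so conjugation by any $g \in G$ restricts to an automorphism of $G'$. This automorphism therefore maps $N$ to itself, and so $N \trianglelefteq G$.

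Next I pass to the quotient $G/N$. There the images $aN$ and $bN$ satisfy $[aN, bN] = [a,b]N = N$, so they commute. Since $G = \langle a,b \rangle$, the quotient $G/N$ is generated by two commuting elements and is therefore abelian. By the defining property of the derived subgroup, $G' \le N$, which gives $G' = \langle [a,b] \rangle$ as required.

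There is no real obstacle. The only point needing care is the normality of $N$, which is why the hypothesis that $G'$ is cyclic is essential. Without it, the subgroup generated by a single commutator need not be normal, and the quotient argument would not apply.
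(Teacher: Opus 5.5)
Your proof is correct. The paper gives no proof of this lemma and simply quotes it from \cite{Fifth}, and your argument is the standard one. You observe that $\langle [a,b]\rangle$ is characteristic in the cyclic normal subgroup $G'$, hence normal in $G$, and that $G/\langle [a,b]\rangle$ is abelian because it is generated by the two commuting cosets $a\langle [a,b]\rangle$ and $b\langle [a,b]\rangle$; equivalently, $G'$ is the normal closure of $[a,b]$.
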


\begin{corollary} \label{lemma 5.13.}
Assume $ G = \langle a,b \rangle $ and $ \gcd(k,|a|) = 1 $, where $ k \in \mathbb{Z} $, and $G'$ is cyclic. Then $ G' = \langle [a^k,b] \rangle $.
\end{corollary}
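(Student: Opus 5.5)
The corollary should follow directly from Lemma~\ref{lemma 5.12}, by replacing the generator $a$ with $a^k$. First I check that $\{a^k,b\}$ still generates $G$. Since $\gcd(k,|a|)=1$, there are integers $u,v$ with $uk+v|a|=1$. Then
\[
a=a^{uk+v|a|}=(a^k)^u .
\]
So $a\in\langle a^k\rangle$, which gives $\langle a^k\rangle=\langle a\rangle$ and hence $G=\langle a,b\rangle=\langle a^k,b\rangle$.

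Next, the hypothesis that $G'$ is cyclic is a property of $G$ alone, so it does not depend on the choice of generators. We may therefore apply Lemma~\ref{lemma 5.12} to the generating pair $(a^k,b)$, which yields $G'=\langle [a^k,b]\rangle$.

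There is no real obstacle here. The only point to check is that Lemma~\ref{lemma 5.12} is stated for an arbitrary two-element generating set, and it is, since it just says ``$G=\langle a,b\rangle$''. The coprimality condition is used solely to guarantee that $a^k$ still generates $\langle a\rangle$. We also need $k$ to be such that $\gcd(k,|a|)=1$ is meaningful; the case $k$ negative is handled identically, since $\gcd$ ignores sign.
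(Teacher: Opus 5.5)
Your proof is correct and is the intended argument: coprimality gives $\langle a^k\rangle=\langle a\rangle$, so $\{a^k,b\}$ still generates $G$, and applying Lemma~\ref{lemma 5.12} to this pair gives $G'=\langle[a^k,b]\rangle$. The paper states the corollary without a separate proof because it is exactly this substitution.
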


\begin{proposition}[{\cite[Theorem 9.4.3 on page 146]{Hall}}, cf.~{\cite[Lemma 2.11]{Sixth}}] \label{Hall Theorem}
Assume $|G|$ is square-free. Then: 
\begin{enumerate}[leftmargin=*]
    \item $ G' $ and $ G/G' $ are cyclic, \label{Hall Theorem1}
    \item $  Z(G) \cap G' = \{e\} $,\label{Hall Theorem2}
    \item $ G \cong \mathbb{Z}_{n} \ltimes G' $, for some $ n \in \mathbb{Z}^+ $,\label{Hall Theorem3}
    \item If $b$ and $\gamma$ are elements of $G$ such that $\langle bG' \rangle = G/G'$ and $ \langle \gamma \rangle = G'$, then $ \langle b,\gamma \rangle = G$, and there are integers $m$, $n$, and $\tau$, such that $ |\gamma| = m $, $ |b| = n $, $ b\gamma b^{-1} = \gamma^{\tau} $, $ mn = |G| $, $ \gcd(\tau-1,m) = 1 $, and $ \tau^n \equiv 1 \pmod{m} $.\label{Hall Theorem4}
\end{enumerate}
\end{proposition}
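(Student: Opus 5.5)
The statement to prove is Proposition~\ref{Hall Theorem}, a structural result about groups of square-free order. I would derive it from the classical Hölder--Burnside--Zassenhaus description of such groups.

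\textbf{Step 1: every Sylow subgroup is cyclic.} Since $|G|$ is square-free, every Sylow subgroup has prime order and is therefore cyclic. A group all of whose Sylow subgroups are cyclic is metacyclic (Hall, Theorem 9.4.3). More precisely, $G$ has a cyclic normal subgroup $M$ with cyclic quotient $G/M$ and $\gcd(|M|,|G/M|)=1$, and we may take $M=G'$.

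I would prove this by induction via Burnside's normal $p$-complement theorem, applied to the smallest prime $p$ dividing $|G|$. A Sylow $p$-subgroup $P$ is cyclic, so $N_G(P)/C_G(P)$ embeds in $\mathrm{Aut}(P)$, which has order $p-1$. Since $p$ is the smallest prime dividing $|G|$, this forces $N_G(P)=C_G(P)$, so $P$ is central in its normalizer. Burnside then gives a normal $p$-complement, and induction shows that $G$ is solvable and has a Sylow tower.

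Next, $G/G'$ is abelian of square-free order, hence cyclic. We also need $G'$ cyclic. The standard route is the Fitting subgroup $F$: it is nilpotent of square-free order, hence cyclic, and $C_G(F)\le F$. Therefore $G/F$ embeds in $\mathrm{Aut}(F)$, which is abelian, so $G'\le F$ and $G'$ is cyclic. This gives (1).

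\textbf{Step 2: coprimality and the splitting in (3).} Here I expect the main obstacle: showing $\gcd(|G'|,|G/G'|)=1$. Suppose a prime $\ell$ divides both orders. Let $L\le G'$ be the subgroup of order $\ell$; it is normal and characteristic in the cyclic group $G'$. Take a Hall $\ell'$-subgroup $H$ of $G$, which exists since $G$ is solvable. Then $G=LH$ with $L\cap H=1$, and the action of $H$ on $L$ is through an abelian group.

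Consider the quotient $G\to G/\ker$ onto $\ell$-parts. Because $\ell^2\nmid |G|$, the Sylow $\ell$-subgroup is exactly $L$. Coprime action gives $L=[L,H]\times C_L(H)$, and since $|L|$ is prime, either $[L,H]=L$ or $L$ is central.
\begin{itemize}
\item If $L$ is central, then $G\cong L\times H$ (Schur--Zassenhaus plus centrality). In that case $L\cap G'=L\cap H'=1$, which is a contradiction.
\item If $[L,H]=L$, then $L\le G'$. But $G/G'$ then has $\ell'$-order, since the unique Sylow $\ell$ lies in $G'$. This is also a contradiction.
\end{itemize}
Hence the two orders are coprime, and Schur--Zassenhaus gives a complement $\langle b\rangle\cong G/G'$, proving (3).

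\textbf{Step 3: the centre, (2).} Write $G'=\langle\gamma\rangle$ and decompose $G'=[G',b]\times C_{G'}(b)$ by coprime action. The factor $C_{G'}(b)$ is central, because $G=\langle b\rangle G'$ and $G'$ is abelian. By the same splitting argument as above, a central subgroup of $G'$ of coprime order splits off as a direct factor and so meets $G'$ trivially. Therefore $C_{G'}(b)=1$, which gives $Z(G)\cap G'=1$.

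\textbf{Step 4: the presentation data, (4).} Suppose $\langle bG'\rangle=G/G'$ and $\langle\gamma\rangle=G'$. Then $\langle b,\gamma\rangle$ maps onto $G/G'$ and contains $G'$, so it equals $G$.

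By coprimality, $|b|=|G/G'|=n$. To see this, note that $b^{n}\in G'$ and $b^n$ commutes with $b$, so $b^n\in C_{G'}(b)=1$. Set $m=|\gamma|$, so that $mn=|G|$. Normality of $G'$ gives $b\gamma b^{-1}=\gamma^\tau$, and $b^n=e$ forces $\tau^n\equiv1\pmod m$.

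Finally, $\gcd(\tau-1,m)=1$ is equivalent to $C_{G'}(b)=1$, which is Step 3. This also yields $G'=[G',b]$, consistent with $G'$ being generated by the commutators $\gamma^{\tau-1}$.
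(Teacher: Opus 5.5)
Your argument is correct, but it does not follow the paper, which gives no proof of its own. The paper imports the statement from Hall's Theorem 9.4.3, the H\"older--Burnside--Zassenhaus description of groups with all Sylow subgroups cyclic as metacyclic groups $\langle A,B \mid A^m=B^n=1,\ B^{-1}AB=A^r\rangle$ with $\gcd((r-1)n,m)=1$ and $r^n\equiv 1 \pmod m$. It does so via Lemma 2.11 of Ghaderpour--Morris, which reads items (1)--(4) off that presentation.

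You instead rebuild the structure from textbook tools:
\begin{enumerate}
\item Burnside's normal $p$-complement theorem gives solvability.
\item The Fitting subgroup, with $C_G(F)\le F$, forces $G'\le F$, so $G'$ is cyclic.
\item Schur--Zassenhaus supplies the complement $\langle b\rangle$.
\item A nontrivial central subgroup of $G'$ would split off as a direct factor and hence meet $G'$ trivially. This gives $C_{G'}(b)=1$, and with it both $Z(G)\cap G'=\{e\}$ and $\gcd(\tau-1,m)=1$.
\end{enumerate}
The citation is shorter and covers all groups with cyclic Sylow subgroups. Your route is self-contained and shows that $\gcd(\tau-1,m)=1$ is exactly the fixed-point-freeness condition $C_{G'}(b)=1$.

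Two small points:
\begin{enumerate}
\item The coprimality you flag as the main obstacle in Step 2 is immediate, since $|G'|\cdot|G/G'|=|G|$ is square-free. That case analysis, and the stray sentence about $G\to G/\ker$, can be dropped.
\item In Step 4, $b$ is an arbitrary element with $\langle bG'\rangle=G/G'$, not the complement generator of Step 3. So $C_{G'}(b)=1$ should be justified by your own remark that $C_{G'}(b)$ is central whenever $\langle b\rangle G'=G$, combined with (2). This is what makes both $|b|=n$ and $\gcd(\tau-1,m)=1$ hold for every such $b$.
\end{enumerate}
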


\section{Generalized Petersen graphs}\label{sec:gp}

\begin{definition}
Let $n$ and $\ell$ be coprime positive integers with $1\le \ell<n$. The
\defin{generalized Petersen graph} $GP(n,\ell)$ has vertex set
\[
\{0,1,\ldots,n-1\}\cup\{0',1',\ldots,(n-1)'\},
\]
and edge set consisting of
\[
(i,i+1),\qquad (i',(i+1)'),\qquad (i,(i\ell)')
\]
for $0\le i<n$, where all indices are taken modulo $n$. The edges
$(i,i+1)$ are called the \defin{outer rim}, the edges $(i',(i+1)')$ are
called the \defin{inner rim}, and the edges $(i,(i\ell)')$ are called the
\defin{spokes}.
\end{definition}

\begin{lemma}\label{lem:gp-cayley-copy}
Let $H=\langle a,b\rangle$ be a group of order $2n$. Assume that
\[
|a|=n,\qquad b\notin\langle a\rangle,\qquad b^{-1}ab=a^\ell,
\]
where $\gcd(n,\ell)=1$. Then $\Cay(H;\{a,b\})$ contains a spanning subgraph
isomorphic to $GP(n,\ell)$ given by the bijection
\[
a^j\longleftrightarrow j,
\qquad
ba^j\longleftrightarrow j',
\qquad 0\le j<n.
\]
\end{lemma}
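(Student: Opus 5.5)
The plan is to check directly that the stated bijection is well defined and bijective, and then that each of the three edge families of $GP(n,\ell)$ is carried to edges of $\Cay(H;\{a,b\})$. Since $|a|=n$, the subgroup $\langle a\rangle$ has index $2$ in $H$. Because $b\notin\langle a\rangle$, the two cosets $\langle a\rangle$ and $b\langle a\rangle$ partition $H$. Hence every element of $H$ is uniquely of the form $a^j$ or $ba^j$ with $0\le j<n$. Exponents may be read modulo $n$ because $a^n=e$. So $a^j\mapsto j$, $ba^j\mapsto j'$ is a bijection between $H$ and the vertex set of $GP(n,\ell)$, and it is compatible with taking all indices modulo $n$.

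Next I check the edges, recalling that in $\Cay(H;\{a,b\})$ each $g$ is joined to $ga$ and to $gb$.
\begin{enumerate}[leftmargin=*]
\item \emph{Outer rim.} Right multiplication by $a$ sends $a^j$ to $a^{j+1}$, so the edge $(j,j+1)$ is the Cayley edge labelled $a$ at $a^j$.
\item \emph{Inner rim.} Right multiplication by $a$ sends $ba^j$ to $ba^{j+1}$, so the edge $(j',(j+1)')$ is the Cayley edge labelled $a$ at $ba^j$.
\item \emph{Spokes.} From $b^{-1}ab=a^\ell$ we get $b^{-1}a^ib=(b^{-1}ab)^i=a^{i\ell}$, and therefore
\[
a^i b = b\,(b^{-1}a^i b)=b a^{i\ell}.
\]
So the Cayley edge labelled $b$ at $a^i$ joins $a^i$ to $ba^{i\ell}$. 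This is exactly the spoke $(i,(i\ell)')$.
\end{enumerate}
Because $\gcd(n,\ell)=1$, the map $i\mapsto i\ell$ is a permutation of $\mathbb{Z}_n$, so the spokes form a perfect matching between outer and inner vertices. This matches the definition of $GP(n,\ell)$.

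The three families together are exactly the edge set of $GP(n,\ell)$, and each is realised by a distinct Cayley edge: the $a$-edges at the $a^j$, the $a$-edges at the $ba^j$, and the $b$-edges at the $a^i$. Since the vertex map is a bijection onto all of $H$, this gives a spanning subgraph of $\Cay(H;\{a,b\})$ isomorphic to $GP(n,\ell)$. Any remaining Cayley edges, for example $b$-edges at the $ba^j$ when $b^2\neq e$, are simply not used.

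There is no real obstacle here. The only points needing care are the conjugation convention, namely that $a^ib=ba^{i\ell}$ follows from $b^{-1}ab=a^\ell$ rather than from $bab^{-1}=a^\ell$, and degenerate small $n$ such as $n\le 2$. In the degenerate case rim edges may coincide, but the paper allows parallel edges, so the correspondence still holds as multigraphs.
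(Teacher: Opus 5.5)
Your proposal is correct and follows essentially the same argument as the paper: you list the vertices as the cosets $\langle a\rangle$ and $b\langle a\rangle$, realize both rims by $a$-edges, and get the spokes from $a^ib=b(b^{-1}a^ib)=ba^{i\ell}$. Your extra remarks (uniqueness of the normal form, the spokes forming a perfect matching, exponents read modulo $n$, and the degenerate small-$n$ case) are harmless additions that the paper leaves implicit.
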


\begin{proof}
Since $|a|=n$, $|H|=2n$, and $b\notin\langle a\rangle$, the vertices of
$H$ are precisely
\[
a^j,\qquad ba^j\qquad (0\le j<n).
\]
The edges $a^j\sim a^{j+1}$
give the outer rim, and the edges $ba^j\sim ba^{j+1}$
give the inner rim. Finally, $a^jb=b(b^{-1}a^jb)=ba^{j\ell}$,
so the edge labelled $b$ from $a^j$ gives the spoke
$j\sim (j\ell)'$.
Thus the displayed correspondence gives a spanning copy of $GP(n,\ell)$.
\end{proof}

\begin{lemma}\label{lem:gp-complementary-parameter}
For coprime integers $n$ and $\ell$ with $1\le \ell<n$, we have $GP(n,\ell)\cong GP(n,n-\ell)$. One such isomorphism is $\varphi(i)=i$,
$\varphi(j')=(-j)'$.
\end{lemma}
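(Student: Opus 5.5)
We verify directly that the map $\varphi$ given in the statement, namely $\varphi(i)=i$ and $\varphi(j')=(-j)'$ (indices modulo $n$), is a graph isomorphism from $GP(n,\ell)$ to $GP(n,n-\ell)$. First, both graphs are well defined: $\gcd(n,n-\ell)=\gcd(n,\ell)=1$ and $1\le n-\ell<n$. Next, $\varphi$ is a bijection on the vertex set $\{0,\dots,n-1\}\cup\{0',\dots,(n-1)'\}$. It fixes the outer vertices, and on the inner vertices it is $j'\mapsto(-j)'$, which is an involution since negation modulo $n$ is a permutation of $\mathbb{Z}_n$ that is its own inverse. Since both graphs have $3n$ edges, with the same multiplicities, it suffices to check that $\varphi$ sends each edge of $GP(n,\ell)$ to an edge of $GP(n,n-\ell)$. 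Applying the same argument to the inverse map (which is $\varphi$ itself) then gives equality of edge multisets.

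We check the three edge types in turn.

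\textbf{Outer rim.} The edge $(i,i+1)$ is fixed by $\varphi$, so it is an outer rim edge of $GP(n,n-\ell)$.

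\textbf{Inner rim.} The edge $(i',(i+1)')$ maps to $((-i)',(-i-1)')$. Setting $k=-i-1$, this is $((k+1)',k')$, which is an inner rim edge.

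\textbf{Spokes.} The spoke $(i,(i\ell)')$ maps to $(i,(-i\ell)')$. Since $-i\ell\equiv i(n-\ell)\pmod n$, this is exactly the spoke $(i,(i(n-\ell))')$ of $GP(n,n-\ell)$.

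Because $\varphi$ is a bijection and each edge class maps onto the corresponding edge class (the maps $i\mapsto i$ and $i\mapsto -i-1$ on $\mathbb{Z}_n$ are bijections), the edge multisets correspond exactly. This also covers the degenerate small cases where parallel edges occur, such as $n=2$, where the rim edges $(0,1)$ and $(1,0)$ coincide as a parallel pair.

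There is no real obstacle here. The only point needing care is the convention that indices are taken modulo $n$, and the identification $-\ell\equiv n-\ell$ in the spoke check, which is the key step.
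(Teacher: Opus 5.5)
Your proof is correct and follows the same route as the paper's: fix the outer vertices, negate the inner indices, and check the three edge classes, with the spoke check reducing to $-i\ell\equiv i(n-\ell)\pmod n$. Your extra bookkeeping (checking that $GP(n,n-\ell)$ is defined, and using bijections within each edge class to handle parallel edges) refines the paper's brief ``preserves adjacency and is a bijection'' conclusion without changing the approach.
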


\begin{proof}
The outer-rim edge $(i,i+1)$ maps to $(i,i+1)$, which is again an outer-rim
edge. 
The inner-rim edge $(j',(j+1)')$ maps to $((-j)',(-(j+1))')$,
which is an inner-rim edge with the orientation reversed. Finally, the spoke
$(i,(i\ell)')$ maps to $(i,(-i\ell)')$.
But in $GP(n,n-\ell)$, the spoke at $i$ is
\[
(i,(i(n-\ell))')=(i,(-i\ell)').
\]
Thus $\varphi$ preserves adjacency and is a bijection, so it is an isomorphism.
\end{proof}

\begin{proposition}\label{prop:gp-cycles}
Let $p,r$ be odd primes with $p>5$ and $r\equiv1\pmod p$. Define
\[
n=rp,
\qquad
\ell=r(p-2)+1,
\qquad
k=n-\ell=2r-1,
\qquad
m=n-3k-1.
\]
Suppose $H=\langle a,b\rangle$ satisfies the hypotheses of
\Cref{lem:gp-cayley-copy}. Then the spanning copy of
$GP(n,\ell)$ in $\Cay(H;\{a,b\})$ contains the following hamiltonian cycle:
\begin{equation}\label{eq:gp-C0}
\begin{aligned}
C_0=\big(&
a^{-m},b,
a^{-(k-3)},b^{-1},
a^{-1},b,
a^{k-3},b^{-1},
a^{-(k-1)},b,
a,b^{-1},
a,b,
a^{-(k-1)},b^{-1},\\
&
a^{k-3},b,
a^{-1},b^{-1},
a^{-(k-3)},b,
a^{-m},b^{-1}
\big).
\end{aligned}
\end{equation}
Moreover, we can construct further hamiltonian cycles as follows. For $t\in \{1,2\}$, let $i=tp$. Then, with all indices taken modulo
$n$,
\begin{equation}\label{eq:gp-Ds}
D_t=
\bigl(
 i-1,\ i,\ i',\ (i-1)',\ k+i,\ k+i-1,\ (k+i-1)',\ (k+i)',\ i-1
\bigr)
\end{equation}
is an $8$-cycle in $GP(n,\ell)$. Its four rim edges lie in $C_0$, and its
four spoke edges do not. Hence the (edge-wise) symmetric difference
$C_t=C_0\triangle D_t$ is also a hamiltonian cycle in $GP(n,\ell)$, and therefore also in
$\Cay(H;\{a,b\})$.
\end{proposition}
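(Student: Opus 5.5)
The plan is to work entirely inside $GP(n,\ell)$, using the bijection of \Cref{lem:gp-cayley-copy}. A step $a^{\pm1}$ moves along the current rim by $\pm1$. A step $b$ from an outer vertex $j$ goes to $(j\ell)'$, and $b^{-1}$ from an inner vertex $j'$ goes to $j\ell^{-1}$.

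\textbf{Arithmetic facts.} Since $r\equiv1\pmod p$ and $n=rp$ with $\gcd(r,p)=1$, reduce $\ell$ modulo the two primes:
\[
\ell\equiv 1\pmod r,\qquad \ell\equiv r(p-2)+1\equiv -1\pmod p .
\]
By the Chinese remainder theorem $\ell^2\equiv1\pmod n$, so $\ell^{-1}=\ell$ and every spoke is "symmetric". Also $\ell\equiv -k\pmod n$, and $k=2r-1$ satisfies $k\equiv -1\pmod r$ and $k\equiv 1\pmod p$. These congruences are the only inputs I expect to need. The hypothesis $p>5$ is only used to guarantee that the various segment lengths ($k-3$, $m$, and so on) are positive and that the relevant residues are distinct.

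\textbf{$C_0$ is a hamiltonian cycle.} First count steps. The $a$-steps total
\[
2m+4(k-3)+2(k-1)+4 \;=\; 2m+6k-10 \;=\; 2n-12,
\]
and there are $12$ $b$-steps, so the length is exactly $2n=|H|$.

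Next, trace the walk from $0$. It alternates between maximal runs on the outer rim and on the inner rim, with a $b^{\pm1}$ switching rims between runs. For each run I record its starting residue (computed with $\ell\equiv-k$) and its direction and length. This gives six arcs on the outer rim and six on the inner rim. I then check that the outer arcs tile $\mathbb Z_n$ without overlap, and likewise the inner arcs. The endpoints should line up as consecutive intervals: the lengths $m$, $k-3$, $1$, $k-1$ are chosen so that successive arcs abut. Finally I check that the last $b^{-1}$ returns to $0$; equivalently the product of the sequence in \eqref{eq:gp-C0} is $e$ in $H$. Since the walk has length $2n$, visits every vertex once, and closes up, it is a hamiltonian cycle.

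\textbf{The $8$-cycles $D_t$.} Put $i=tp$ and verify each edge of \eqref{eq:gp-Ds} via CRT.
\begin{itemize}
\item Spoke $i\sim i'$: since $i\equiv0\pmod p$ and $\ell\equiv1\pmod r$, we get $i\ell\equiv i$.
\item Spoke $(i-1)'\sim k+i$: we need $(k+i)\ell\equiv i-1$. Modulo $r$ this reads $k+i\equiv i-1$, true since $k\equiv-1$. Modulo $p$ it reads $-(k+i)\equiv -1\equiv i-1$, true since $k\equiv1$ and $i\equiv0$.
\item The spokes $(k+i-1)\sim(k+i-1)'$ and $(k+i)'\sim i-1$ are checked the same way.
\item The four remaining edges are rim edges by inspection.
\end{itemize}
From the run description of $C_0$, I read off that the rim edges $\{i-1,i\}$, $\{(i-1)',i'\}$, $\{k+i-1,k+i\}$ and $\{(k+i-1)',(k+i)'\}$ lie on $C_0$, and that these four spokes are not among the $12$ spokes used by $C_0$.

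\textbf{$C_t=C_0\triangle D_t$ is hamiltonian, and the main obstacle.} Every vertex of $D_t$ loses one $C_0$-edge and gains one spoke, so $C_t$ is $2$-regular and spanning. The only issue is connectivity. Removing the four rim edges cuts $C_0$ into four paths. I would list the eight endpoints in the cyclic order in which $C_0$ meets them, using the arc description of $C_0$. Then I check that the four new spokes join these paths into a single cycle rather than two; this amounts to checking a fixed crossing pattern in a $4$-path pairing.

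I expect the main obstacle to be the bookkeeping in the step showing $C_0$ is a hamiltonian cycle: tracking the twelve run endpoints modulo $n$ correctly for general $r,p$. The connectivity check for $C_t$ reuses that same data, so getting the run table right settles both parts.
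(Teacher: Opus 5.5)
Your plan is correct and is essentially the paper's proof: the paper uses the same congruences $k^2\equiv\ell^2\equiv1$ and $\ell\equiv-k \pmod n$, writes out exactly the run table you describe (the order in which $C_0$ visits the outer and inner rims), and checks the spokes of $D_t$ by congruences. It then locates the rim edges of $D_t$ on $C_0$, compares them with the twelve outer spoke endpoints $0,1,2,3,k,k+1,k+2,k+3,2k,2k+1,3k,3k+1$ of $C_0$, and finishes with a direct trace. One caution for your bookkeeping: the inner edge $\{(k+i-1)',(k+i)'\}$ lies in the final inner run $-(3k+1)',\dots,0'$ only when $p\ge 11$, while for $p=7$ it lies in the run $-(2k+1)',\dots,-(3k)'$, so the cyclic order of the four deleted edges (and hence the crossing pattern) is not fixed but differs between these two cases---both nevertheless reconnect into a single cycle.
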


\begin{proof}
Since $r\equiv1\pmod p$, we have $k\equiv -1\pmod r,\text{and}~
k\equiv1\pmod p$.
Hence $k^2\equiv1\pmod n$.
Also $\ell=n-k$, so $\ell\equiv -k\pmod n$ and $\ell^2\equiv1\pmod n$.
Since $p>5$, we have $m=n-3k-1=r(p-6)+2>0$.

We identify the spanning Petersen subgraph with $GP(n,\ell)$ as in
\Cref{lem:gp-cayley-copy}. Under this identification, the word $C_0$ in
\eqref{eq:gp-C0} visits the outer vertices in the following order:
\[
\begin{aligned}
&0,-1,\ldots,3k+1;\\
&2,1;\\
&3k,3k-1,\ldots,2k+1;\\
&k+1,k+2;\\
&3,4,\ldots,k;\\
&2k,2k-1,\ldots,k+3.
\end{aligned}
\]
These are all residues modulo $n$, each appearing exactly once. Similarly, it
visits the inner vertices in the following order:
\[
\begin{aligned}
&-(k+3)',-(k+4)',\ldots,-(2k)';\\
&-k',-(k-1)',\ldots,-3';\\
&-(k+2)',-(k+1)';\\
&-(2k+1)',-(2k+2)',\ldots,-3k';\\
&-1',-2';\\
&-(3k+1)',-(3k+2)',\ldots,0'.
\end{aligned}
\]
These are also all residues modulo $n$, each appearing exactly once. Thus $C_0$
is a hamiltonian cycle.

It remains to verify the local cycles $D_t$. Fix $t\in\{1,2\}$, and put
$i=tp$. Since
\[
i\ell-i=i(\ell-1)=tp\cdot r(p-2)\equiv0\pmod n,
\]
we have $i\ell\equiv i\pmod n$.
Using $\ell\equiv-k\pmod n$ and $\ell^2\equiv1\pmod n$, we obtain
\[
(i-1)\ell\equiv i\ell-\ell\equiv i+k\pmod n,
\]
\[
(k+i)\ell\equiv k\ell+i\ell\equiv -\ell^2+i\equiv i-1\pmod n,
\]
and
\[
(k+i-1)\ell\equiv (k+i)\ell-\ell\equiv i-1+k\equiv k+i-1\pmod n.
\]
These congruences give the four spoke edges
\[
i\sim i',
\qquad
 i-1\sim (k+i)',
\qquad
 k+i\sim (i-1)',
\qquad
 k+i-1\sim (k+i-1)'.
\]
The remaining four consecutive pairs in $D_t$ are rim edges. Hence $D_t$ is a
closed walk in $GP(n,\ell)$. Since $r\equiv1\pmod p$ and $r$ is prime, we
have $r\ge2p+1$, so
\[
0<i-1<i<k+i-1<k+i<n.
\]
Therefore, the four vertices of $D_t$ are distinct, and likewise for the four primed vertices. It follows that $D_t$ is an $8$-cycle.

From the displayed order of $C_0$, the outer rim edges
\[
(i-1,i)
\qquad\text{and}\qquad
(k+i,k+i-1)
\]
belong to $C_0$. The inner rim edge $(i',(i-1)')$ belongs to the final inner
segment of $C_0$. The other inner rim edge $((k+i-1)',(k+i)')$ also belongs to
$C_0$: if $p\ge11$, it lies in the final inner segment, while if $p=7$, it
lies in the segment
\[
-(2k+1)',-(2k+2)',\ldots,-3k'.
\]
Thus the four rim edges of $D_t$ lie in $C_0$.

The spokes of $C_0$ have outer endpoints
\[
0,\ 1,\ 2,\ 3,\ k,\ k+1,\ k+2,\ k+3,\ 2k,\ 2k+1,\ 3k,\ 3k+1.
\]
The four spoke endpoints of $D_t$ are
\[
i-1,\ i,\ k+i-1,\ k+i.
\]
None of these is in the preceding list, because
\[
3<i-1<i<k
\qquad\text{and}\qquad
k+3<k+i-1<k+i<2k.
\]
Hence $D_t\cap C_0$ consists exactly of the four rim edges of $D_t$.

Therefore $C_0\triangle D_t$ is obtained from $C_0$ by deleting those four rim
edges and inserting the four spoke edges of $D_t$. A direct trace from any one
of the affected vertices shows that these four inserted spokes reconnect the four
resulting paths into a single cycle through the same vertex set. Hence
$C_t=C_0\triangle D_t$ is again a hamiltonian cycle.
\end{proof}

\begin{figure}[H]
\centering
\begin{tikzpicture}[
  x=1cm,
  y=1cm,
  vertex/.style={
    circle,
    draw,
    minimum size=5pt,
    inner sep=0pt
  },
  outer/.style={
    vertex,
    fill=white
  },
  inner/.style={
    vertex,
    fill=black!18
  },
  used/.style={
    line width=1.15pt
  },
  unused/.style={
    densely dashed,
    line width=0.55pt,
    draw=black!45
  },
  every node/.style={
    font=\small
  }
]

% Before the symmetric-difference switch.
\begin{scope}[xshift=-3.6cm]
  \node[outer,label=above left:{$i-1$}]
    (ao1) at (-2,1) {};

  \node[outer,label=above:{$i$}]
    (ao2) at (-1,2) {};

  \node[inner,label=above:{$i'$}]
    (ai1) at (1,2) {};

  \node[inner,label=above right:{$(i-1)'$}]
    (ai2) at (2,1) {};

  \node[outer,label=below right:{$k+i$}]
    (ao4) at (2,-1) {};

  \node[outer,label=below right:{$k+i-1$}]
    (ao3) at (1,-2) {};

  \node[inner,label=below left:{$(k+i-1)'$}]
    (ai3) at (-1,-2) {};

  \node[inner,label=below left:{$(k+i)'$}]
    (ai4) at (-2,-1) {};

  % Rim edges belonging to C_0.
  \draw[used] (ao1)--(ao2);
  \draw[used] (ai1)--(ai2);
  \draw[used] (ao4)--(ao3);
  \draw[used] (ai3)--(ai4);

  % Spokes not belonging to C_0.
  \draw[unused] (ao2)--(ai1);
  \draw[unused] (ai2)--(ao4);
  \draw[unused] (ao3)--(ai3);
  \draw[unused] (ai4)--(ao1);

  \node at (0,-3.05) {(a) $C_0$};
\end{scope}

% After the symmetric-difference switch.
\begin{scope}[xshift=3.6cm]
  \node[outer,label=above left:{$i-1$}]
    (bo1) at (-2,1) {};

  \node[outer,label=above:{$i$}]
    (bo2) at (-1,2) {};

  \node[inner,label=above:{$i'$}]
    (bi1) at (1,2) {};

  \node[inner,label=above right:{$(i-1)'$}]
    (bi2) at (2,1) {};

  \node[outer,label=below right:{$k+i$}]
    (bo4) at (2,-1) {};

  \node[outer,label=below right:{$k+i-1$}]
    (bo3) at (1,-2) {};

  \node[inner,label=below left:{$(k+i-1)'$}]
    (bi3) at (-1,-2) {};

  \node[inner,label=below left:{$(k+i)'$}]
    (bi4) at (-2,-1) {};

  % Rim edges deleted from C_0.
  \draw[unused] (bo1)--(bo2);
  \draw[unused] (bi1)--(bi2);
  \draw[unused] (bo4)--(bo3);
  \draw[unused] (bi3)--(bi4);

  % Spokes inserted into C_t.
  \draw[used] (bo2)--(bi1);
  \draw[used] (bi2)--(bo4);
  \draw[used] (bo3)--(bi3);
  \draw[used] (bi4)--(bo1);

  \node at (0,-3.05)
    {(b) $C_t=C_0\mathbin{\triangle}D_t$};
\end{scope}

\node[font=\footnotesize] at (0,-4.05) {
  solid edges: in the hamiltonian cycle
  \qquad
  dashed edges: not in the hamiltonian cycle
};

\end{tikzpicture}

\caption{The local symmetric-difference switch in
\Cref{prop:gp-cycles}. Only the eight edges of $D_t$ are shown.
In $C_0$, the four rim edges of $D_t$ are used and its four spokes
are not; passing to $C_t=C_0\mathbin{\triangle}D_t$ reverses these
choices.}
\label{fig:gp-cycle-switch}
\end{figure}
\section{Proof of the Main Result}\label{ch:6}

In this section, we prove \Cref{2pqr}. The proof is a case-by-case analysis. 
%\textcolor{red}{FL: preliminary results up to here would be better placed in section 2 (especially since their implications for us are repeated in Assuption 4.6 anyway)}

%\textcolor{red}{FL: not sure whether notation 4.5 is helpful; it isn't used in the next few pages, and the first time a $\tau$ pops up it is not related to this notation.}
%{\color{blue}we deleted the notation, as it was clear.}

Here are our main assumptions.
\begin{assumption} We assume: \label{assumption 3.1}
\begin{enumerate}[leftmargin=*]
    \item $ G' \cap Z(G) = \{e\} $, by \Cref{Hall Theorem}(\ref{Hall Theorem2}). \label{assumption 3.1.2}
    \item $ G \cong \mathbb{Z}_{n}\ltimes G' $, by \Cref{Hall Theorem}(\ref{Hall Theorem3}).
    \item By \Cref{proposition 5.4}, we can assume $ |G'| \in \{pq,pr,qr\} $. Without loss of generality we may assume $ |G'| = pq $, so $ G' = \mathbb{Z}_p\times\mathbb{Z}_q $ by \Cref{Hall Theorem}(1).
    \item For every element $ \overline{s} \in \overline{S} $, $ |\overline{s}| \neq 1 $. Otherwise, if $ |\overline{s}| = 1 $, then $ s \in G' $, so $ G' = \langle s \rangle $ or $ |s| $ is prime. In each case $ \Cay(G/\langle s \rangle;\overline{S}) $ has a hamiltonian cycle by~\Cref{theorem 1.2}(\ref{theorem 1.2.2}) and ~\Cref{theorem 1.2}(\ref{theorem 1.2.3}). By \Cref{assumption 3.1}(\ref{assumption 3.1.2}), $ \langle s \rangle \cap Z(G) = \{e\} $, therefore, \Cref{lemma 5.6}(\ref{lemma 5.6 n2}) applies. \label{assumption 3.1.6}
\end{enumerate}
\end{assumption}

By Theorem~\ref{theorem 1.2}\eqref{theorem 1.2.2},
Theorem~\ref{theorem 1.2}\eqref{theorem 1.2.3},
and Theorem~\ref{pqrs-3}, it remains to treat the case where
$|G|=2pqr$ and $|S|\le 2$.
The following theorem settles exactly this remaining case.

\mainn*
\begin{proof}
Let $G=\langle a,b\rangle$. If $G$ is abelian, then the result follows from Lemma~\ref{abelain group}. So assume $G$ is non-abelian. Moreover, assume the conditions in Assumption~\ref{assumption 3.1} are satisfied. Since $|\overline G|=2r$, \Cref{Hall Theorem} implies that $\overline G$ is cyclic of order $2r$. Therefore $|\overline a|,\ |\overline b|\in\{2,r,2r\}$.

The cases $|\overline a|=|\overline b|=2$ and $|\overline a|=|\overline b|=r$ are impossible, because then
$\langle \overline a,\overline b\rangle\neq \overline G$.
So, after interchanging $a$ and $b$ if necessary, we are left with three cases: 
\begin{enumerate}
    \item $|\overline a|=r$ and $|\overline b|\in\{2,2r\}$,
    \item $|\overline a|=2r$ and $|\overline b|=2$,
    \item $|\overline a|=|\overline b|=2r$.
\end{enumerate}

\begin{case}\label{case:2pqr-1}
Assume $|\overline a|=r$ and $|\overline b|\in\{2,2r\}$.

Then $|a|\in\{r,rp,rq,rpq\}$.

\begin{itemize}[leftmargin=*]
\item If $|a|=r$, then $\gcd(|a|,r-1)=1$, so Lemma~\ref{lemma 5.16.} applies.

\item If $|a|=rpq$, then $\mathbb Z_r$ centralizes $\mathbb Z_p\times \mathbb Z_q$. Since $G'\cap Z(G)=\{e\}$, the subgroup $\mathbb Z_2$ inverts $\mathbb Z_p\times \mathbb Z_q$. Hence
\[
G\cong \mathbb Z_r\times(\mathbb Z_2\ltimes \mathbb{Z}_{pq})
\cong \mathbb Z_r\times D_{2pq},
\]
so Lemma~\ref{lemma 5.8} applies.

\item Assume $|a|\in\{rp,rq\}$. Without loss of generality, let $|a|=rp$.
Then $\mathbb Z_r$ centralizes $\mathbb Z_p$. 
Since $G'\cap Z(G)=\{e\}$, the subgroup $\mathbb Z_2$ acts non-trivially on $\mathbb Z_p$, and hence inverts $\mathbb Z_p$. Therefore $|b|\in\{2,2q,2r\}$.
We note that $|b|=2qr$ is already ruled out.
If $|b|=2q$, then Corollary~\ref{corollary 5.2} applies.
So assume $|b|\in\{2,2r\}$. If $\gcd(rp,r-1)=1$, then Lemma~\ref{lemma 5.16.} applies. Hence we may assume $\gcd(p,r-1)\neq 1$, so $r\equiv 1\pmod p$.

Consider
\[
\widecheck G=G/\mathbb Z_q\cong \mathbb Z_r\times D_{2p}
\cong (\mathbb Z_2\times \mathbb Z_r)\ltimes \mathbb Z_p.
\]
After conjugation, we may assume $\widecheck a=a_r\gamma_p\,\mathbb Z_q,~
\widecheck b=a_2a_r^j\,\mathbb Z_q$
for some $j$ satisfying $0\le j\le r-1$, where $\langle a_r\rangle=\mathbb Z_r$ and $\langle \gamma_p\rangle=\mathbb Z_p$.
Since $\widecheck b$ centralizes $a_r\mathbb Z_q$ and inverts $\gamma_p\mathbb Z_q$, we have
$\widecheck b^{-1}\widecheck a\,\widecheck b=\widecheck a^{k_0}$,
whenever $k_0\equiv 1\pmod r$ and $k_0\equiv -1\pmod p$.
Because $r\equiv 1\pmod p$, we may choose $k_0=r(p-2)+1$.
Let $n=rp$ and $k=n-k_0$.

%\textcolor{red}{FL: The original formulation was `there is $k_0$ such that\dots' from which we cannot conclude that $k_0 = r(p-2)+1$ works. Please check whether you agree with the way I fixed it.}

%{\color{blue} Looks good.}

\begin{subcase}
Assume $p>5$. We know $\widecheck a=a\mathbb Z_q$, and 
$\widecheck b=b\mathbb Z_q$.
Since
\[
|\widecheck G|=2rp=2n,
\qquad
|\widecheck a|=n,
\qquad
\widecheck b\notin \langle \widecheck a\rangle,
\qquad
\widecheck b^{-1}\widecheck a\widecheck b=\widecheck a^{k_0},
\]
\Cref{prop:gp-cycles} applies. Hence $\Cay(\widecheck G;\widecheck S)$ contains
hamiltonian cycles
\[
C_0,
\qquad
C_1=C_0\triangle D_1,
\qquad
C_2=C_0\triangle D_2,
\]
where $C_0$ is the cycle in \eqref{eq:gp-C0} and $D_t$ is the $8$-cycle in
\eqref{eq:gp-Ds}.

We now compute the voltage with respect to the normal subgroup $\mathbb Z_q$. For simplicity let
$\gamma=\gamma_q$ and write
$a_r\gamma a_r^{-1}=\gamma^\tau$.
If $\tau=1$, then $\mathbb Z_r$ centralizes both $\mathbb Z_p$ and $\mathbb Z_q$.
Since $\mathbb Z_2$ inverts $\mathbb Z_p$ and $G'\cap Z(G)=\{e\}$, the subgroup
$\mathbb Z_2$ must also act non-trivially on $\mathbb Z_q$, hence it inverts
$\mathbb Z_q$. Therefore $G\cong D_{2pq}\times \mathbb Z_r$,
so \Cref{lemma 5.8} applies. Hence we may assume $\tau\neq 1$. Since $r$ is prime
and $\tau^r=1$ i.e. $\tau^r \equiv 1 \mod q$, the element $\tau$ has order $r$ modulo $q$.
We note that by repeatedly multiplying the identity
$a_r\gamma a_r^{-1}=\gamma^\tau$
on the left by $a_r$ and on the right by $a_r^{-1}$, we obtain
$a_r^i\gamma a_r^{-i}=\gamma^{\tau^i}$
for every integer $i\ge 1$.

%\textcolor{red}{FL: there seems to be some detail missing here - I assume $\tau^r=1$ means $\tau \equiv 1 \mod q$? It might also be worth including a short justification along the lines of `note that $a_r^i \gamma a_r^{-i} = \gamma^{\tau^i}$' here.}
%{\color{blue} we added an explanation.}

After conjugating by an element of $\mathbb Z_q$, we may assume, modulo
$\mathbb Z_p$, that $a\equiv a_r$.
After simultaneously conjugating $a$ and $b$ by an element of
$\mathbb Z_q$, we may assume that $a\mathbb Z_p=a_r\mathbb Z_p$.
Since $G/\mathbb Z_p\cong
(\mathbb Z_2\times\mathbb Z_r)\ltimes\mathbb Z_q$
and $b\mathbb Z_q=a_2a_r^j\mathbb Z_q$, there is some
$u\in\mathbb Z/q\mathbb Z$ such that $b\mathbb Z_p=a_2a_r^j\gamma^u\mathbb Z_p$.
We must have $u\not\equiv0\pmod q$, because otherwise the images of
$a$ and $b$ would generate no element of the normal subgroup
$\mathbb Z_q$ in $G/\mathbb Z_p$, contradicting
$\langle a\mathbb Z_p,b\mathbb Z_p\rangle=G/\mathbb Z_p$.
Since $q$ is prime, $\gamma^u$ is a generator of $\mathbb Z_q$.
Replacing $\gamma$ by $\gamma^u$, we may therefore write
$b\mathbb Z_p=a_2a_r^j\gamma\mathbb Z_p$ and so $b\equiv a_2a_r^j\gamma \pmod {\mathbb Z_p}$.
Also write
\[
a_2\gamma a_2^{-1}=\gamma^\varepsilon,
\qquad \varepsilon\in\{1,-1\}.
\]
A direct multiplication, using
\[
a_r^x\gamma a_r^{-x}=\gamma^{\tau^x},
\]
gives
\[
\operatorname{Volt}(C_0)\equiv \gamma^{\varepsilon\tau^j V_0}\pmod {\mathbb Z_p},
\]
where
\[
V_0=-\tau^3+2\tau-2+2\tau^{-1}-\tau^{-3}.
\]
Equivalently, we obtain  $V_0=-\tau^{-3}(\tau-1)^2
\bigl(\tau^4+2\tau^3+\tau^2+2\tau+1\bigr)$.
If $V_0\not\equiv0\pmod q$, then the voltage generates $\mathbb Z_q$, so the
\Cref{FGL} applies. Thus assume $V_0\equiv0\pmod q$.

For $t=1,2$, we use the hamiltonian cycle $C_t=C_0\triangle D_t$
given by \Cref{prop:gp-cycles}. A direct voltage calculation gives
\[
\operatorname{Volt}(C_t)\equiv \gamma^{\varepsilon\tau^j V_t}\pmod {\mathbb Z_p}.
\]
If $p\ge 11$, then
\[
V_t=
V_0+\tau^{tp-2}(1+2\tau-\tau^2)-2\tau^2,
\qquad t=1,2.
\]
If $p=7$, then
\[
V_t=
V_0+2\tau^3-\tau^{7t-2}(1+\tau^2),
\qquad t=1,2.
\]

We now show that at least one of $V_0,V_1,V_2$ is non-zero modulo $q$. Suppose, for a
contradiction, that
\[
V_0\equiv V_1\equiv V_2\equiv0\pmod q.
\]

%\textcolor{red}{FL: Am i right in assuming that all computations with $\tau$ below are mod $q$? If so, it may be worth stating it. If not, what is going on here?}

%{\color{blue} yes it is mod $q$.}

First assume $p\ge11$. From $V_1\equiv V_0 \equiv 0 \pmod{q}$ and $V_2 \equiv V_0 \equiv 0 \pmod{q}$, we obtain
\[
\tau^{p-2}(1+2\tau-\tau^2) \equiv 2\tau^2 \pmod{q}
\]
and
\[
\tau^{2p-2}(1+2\tau-\tau^2)\equiv 2\tau^2 \pmod{q}.
\]
The factor $1+2\tau-\tau^2$ is non-zero, because $2\tau^2\not\equiv 0 \pmod{q}$.
Dividing the two displayed equations gives $\tau^p\equiv 1 \pmod{q}$.
But $\tau$ has order $r$, and $r\equiv1\pmod p$, so $r>p$. This is impossible.

It remains to consider $p=7$. Then $V_1\equiv V_0\equiv 0 \pmod{q}$ and $V_2\equiv V_0\equiv 0\pmod{q}$ imply
\[
\tau^5(1+\tau^2)\equiv 2\tau^3 \pmod{q}
\]
and
\[
\tau^{12}(1+\tau^2)\equiv 2\tau^3 \pmod{q}.
\]
Again $1+\tau^2\neq0$, so division gives $\tau^7=1$.
But $\tau$ has order $r$, and $r\equiv1\pmod7$, so $r>7$. This is impossible.

Therefore at least one of $C_0,C_1,C_2$ has non-trivial voltage modulo $\mathbb Z_q$.
Since $|\mathbb Z_q|=q$ is prime, this voltage generates $\mathbb Z_q$. Thus, \Cref{FGL} gives a hamiltonian cycle in $\Cay(G;S)$.
\end{subcase}

\begin{subcase}
Assume $p=5$. Note that $ r\geq 5 $. 
Then $\Cay(\widecheck G;\widecheck S)$ has the following
hamiltonian cycle (see~\Cref{fig:hc5}):
\[
\begin{aligned} 
C=\big(&
\widecheck a^{r-5},\widecheck b^{-1},
\widecheck a^{-(2r-4)},\widecheck b,
\widecheck a^{-1},\widecheck b^{-1},
\widecheck a^{r+1},\widecheck b,
\widecheck a^{2r-3},\widecheck b^{-1},
\widecheck a,\widecheck b,\widecheck a,\widecheck b^{-1},\\
&\qquad
\widecheck a^{-1},\widecheck b,
\widecheck a^r,\widecheck b^{-1},
\widecheck a^{r-1},\widecheck b,
\widecheck a^r,\widecheck b^{-1},
\widecheck a^{-(r-4)},\widecheck b
\big).
\end{aligned}
\]
\begin{figure}[H]
    \centering
    \includegraphics[scale=0.8]{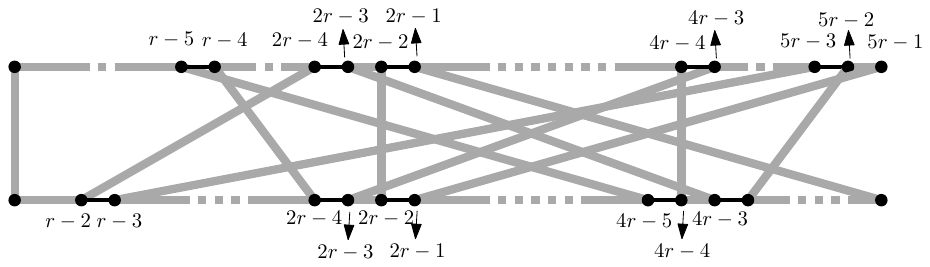}
    \caption{Hamiltonian cycle.}
    \label{fig:hc5}
\end{figure}

%\textcolor{red}{FL: I was worried about $r=3$ for a second, may be worth reminding the reader that this is already ruled out.}

%{\color{blue} Done}

We compute the voltage with respect to the normal subgroup $\mathbb Z_q$. For simplicity let
$\gamma=\gamma_q$, and write
$a_r\gamma a_r^{-1}=\gamma^\tau$.
If $\tau=1$, then $\mathbb Z_r$ centralizes both $\mathbb Z_p$ and $\mathbb Z_q$.
Since $\mathbb Z_2$ inverts $\mathbb Z_p$ and $G'\cap Z(G)=\{e\}$, the subgroup
$\mathbb Z_2$ must act non-trivially on $\mathbb Z_q$, hence it inverts $\mathbb Z_q$.
Therefore
$G\cong D_{2pq}\times \mathbb Z_r$,
so \Cref{lemma 5.8} applies. Hence we may assume $\tau\neq1$. Since $r$ is prime and
$\tau^r=1$, the element $\tau$ has order $r$ modulo $q$.

After conjugating by an element of $\mathbb Z_q$, we may assume, modulo $\mathbb Z_p$,
that $a\equiv a_r$.
Since $\langle a,b\rangle=G$, the $\mathbb Z_q$-component of $b$ is non-trivial. Replacing
$\gamma$ by a suitable generator of $\mathbb Z_q$, write
\[
b\equiv a_2a_r^j\gamma \pmod{\mathbb Z_p}.
\]
Also write
\[
a_2\gamma a_2^{-1}=\gamma^\varepsilon,
\qquad \varepsilon\in\{1,-1\}.
\]

A direct multiplication in $G/\mathbb Z_p$, using
\[
a_r^x\gamma a_r^{-x}=\gamma^{\tau^x},
\]
gives
\[
\operatorname{Volt}(C)
\equiv
\gamma^{\ell}
\pmod{\mathbb Z_p},
\]
where
\[
\ell=-\tau^{-5}+2\tau^{-1}-2\tau^{-2}-\tau^{-4}+\tau^{-3}+1.
\]
If $\ell\not\equiv0\pmod q$, then the voltage generates $\mathbb Z_q$, so
\Cref{FGL} applies. Thus assume
$\ell\equiv0\pmod q$.
Multiplying by $\tau^5$ gives
\[
0\equiv \tau^5+2\tau^4-2\tau^3+\tau^2-\tau-1
=(\tau-1)(\tau^4+3\tau^3+\tau^2+2\tau+1)\pmod q.
\]
Since $\tau\neq1$, we get
\begin{equation}\label{eq:p5-plus}
\tau^4+3\tau^3+\tau^2+2\tau+1\equiv0\pmod q.
\end{equation}

We can replace $ \tau $ with $ \tau^{-1} $ in \Cref{eq:p5-plus} by replacing $ \{\widecheck a, \widecheck b\} $ with $ \{\widecheck a^{-1},\widecheck b^{-1}\} $. Hence, we have 
\begin{equation}
\tau^{-4}+3\tau^{-3}+\tau^{-2}+2\tau^{-1}+1\equiv0\pmod q.
\end{equation}
Multiplying by $ \tau^4 $, we have
\begin{equation}\label{eq:p5}
\tau^{4}+2\tau^{3}+\tau^{2}+3\tau+1\equiv0\pmod q.
\end{equation}
By subtracting \Cref{eq:p5} from \Cref{eq:p5-plus}, we get $ \tau(\tau^2-1)\equiv 0 \pmod{q} $, which is a contradiction.

\end{subcase}
\end{itemize}
\end{case}

\begin{case}\label{case:2pqr-2}
Assume $ |\overline a|=2r $ and $ |\overline b|=2 $. Then
$\overline b=\overline a^r$, so $b=a^r\gamma$ for some $\gamma\in G'$. Since
\[
G=\langle a,b\rangle=\langle a,a^r\gamma\rangle=\langle a,\gamma\rangle,
\]
the element $\gamma$ must generate $G'$. By
\Cref{Hall Theorem}(\ref{Hall Theorem4}), we have $|a|=2r$ and
\[
a\gamma a^{-1}=\gamma^\tau,\qquad
\tau^{2r}\equiv1\pmod{pq},\qquad
\gcd(\tau-1,pq)=1.
\]
In particular, $\tau\not\equiv1\pmod p$ and $\tau\not\equiv1\pmod q$.
We have
\[
C_1=(\overline a^{r-1},\overline b,\overline a^{-(r-1)},\overline b)
\]
as a hamiltonian cycle in $\Cay(\overline G;\overline S)$. Its voltage is
\begin{align*}
    \operatorname{Volt}(C_1)
    &=a^{r-1}ba^{-(r-1)}b \\
    &=a^{r-1}a^r\gamma a^{-(r-1)}a^r\gamma \\
    &=a^{-1}\gamma a\gamma
     =\gamma^{\tau^{-1}+1}.
\end{align*}
We may assume $\gcd(\tau^{-1}+1,pq)\neq1$, for otherwise \Cref{FGL} applies.
Without loss of generality, let $\tau^{-1}\equiv -1\pmod q$, so
$\tau\equiv -1\pmod q$. We may also assume $\tau\not\equiv -1\pmod p$, for otherwise
$G\cong D_{2pq}\times\mathbb Z_r$, so \Cref{lemma 5.8} applies.
Next we consider $\widehat G=G/\mathbb Z_p=\mathbb Z_{2r}\ltimes \mathbb Z_q$.

%\textcolor{red}{FL: as far as I can tell this is the first time we use $\mathcal C$ for cyclic groups, perhaps it would be worth changing it to $(\mathbb Z_2 \times \mathbb Z_r) \ltimes \mathbb Z_q$ for consistency; it would be one less notation for the reader to keep in mind.}
%{\color{blue} Done!}
Since $|a|=2r$, we have $|\widehat a|=2r$. The image of $\gamma$ in
$G/\mathbb Z_p$ is a generator of $\mathbb Z_q$, which we denote by $\gq$. Hence we have $\widehat b=\widehat a^r\gq$.
Since $\tau\equiv -1\pmod q$, the subgroup $\mathbb Z_r$ centralizes
$\mathbb Z_q$, while $\mathbb Z_2$ inverts $\mathbb Z_q$. 
Therefore we obtain $\widehat G\cong D_{2q}\times\mathbb Z_r$.

\begin{subcase}
If $q>r$, then
\[
C_2=
\left(
 (\widehat a^{2r-1},\widehat b,\widehat a^{-(2r-1)},\widehat b)^{(q-r)/2},
 (\widehat a^{2r-1},\widehat b)^r
\right)
\]
is a hamiltonian cycle in $\Cay(\widehat G;\widehat S)$. The picture in \Cref{Fig1}
shows the hamiltonian cycle when $q=7$ and $r=3$.
\begin{figure}[H]
    \centering
    \includegraphics[scale=0.8]{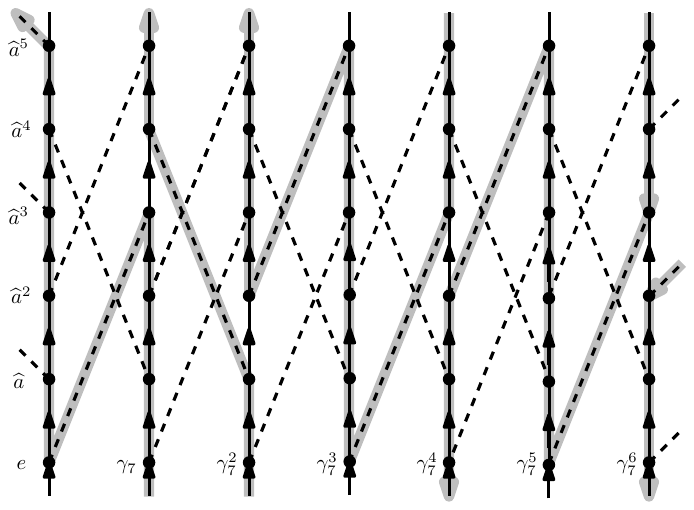}
    \caption{The union of the thick gray edges forms the hamiltonian cycle $C_2$ in $\Cay(\widehat{G}; \widehat{S})$.}
    \label{Fig1}
\end{figure}

If in $C_2$ we change one occurrence of
$(\widehat a^{2r-1},\widehat b,\widehat a^{-(2r-1)},\widehat b)$ to
$(\widehat a^{-(2r-1)},\widehat b,\widehat a^{2r-1},\widehat b)$, we obtain another hamiltonian cycle. Note that
$$a^{2r-1}ba^{-(2r-1)}b =a^{2r-1}\cdot a^r\gamma\cdot a^{-(2r-1)}\cdot a^r\gamma =a^{r-1}\gamma a^{-(r-1)}\gamma=\gamma^{\tau^{r-1}+1},$$
and
$$a^{-(2r-1)}ba^{2r-1}b =a^{-(2r-1)}\cdot a^r\gamma\cdot a^{2r-1}\cdot a^r\gamma =a^{-(r-1)}\gamma a^{r-1}\gamma=\gamma^{\tau^{-(r-1)}+1}.$$
We may assume
$\tau^{r-1}+1\equiv\tau^{-(r-1)}+1\pmod p$, for otherwise the two hamiltonian cycles
have different voltages, so one has non-trivial voltage and \Cref{FGL} applies. Hence
$\tau^{r-1}\equiv\tau^{-(r-1)}\pmod p$, so
$\tau^{2r-2}\equiv1\pmod p$. Since $\tau^{2r}\equiv1\pmod p$, this implies
$\tau^2\equiv1\pmod p$, which is impossible because
$\tau\not\equiv1\pmod p$ and $\tau\not\equiv -1\pmod p$.
\end{subcase}

\begin{subcase}
If $r>q$, then
\[
C_3=\left(
(\widehat a^r,\widehat b,\widehat a^{-r},\widehat b)^{(q-1)/2},
\widehat a^{-(r-1)},\widehat b,
(\widehat a^{r-2},\widehat b,\widehat a^{-(r-2)},\widehat b)^{(q-1)/2},
\widehat a^{r-1},\widehat b
\right)
\]
is a hamiltonian cycle in $\Cay(\widehat G;\widehat S)$. The picture in \Cref{Fig2}
shows the hamiltonian cycle when $q=3$ and $r=7$.
\begin{figure}[H]
    \centering
    \includegraphics[scale=1]{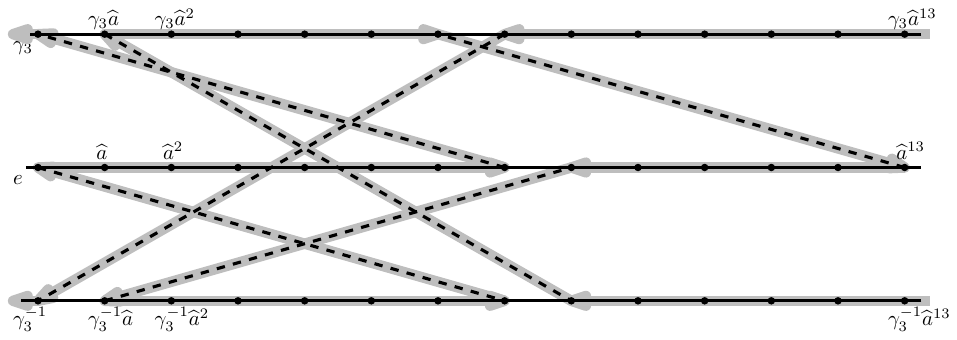}
    \caption{The union of the thick gray edges forms the hamiltonian cycle $C_3$ in $\Cay(\widehat{G}; \widehat{S})$.}
    \label{Fig2}
\end{figure}
Now we calculate its voltage:
\begin{align*}
\operatorname{Volt}(C_3)
&=(a^rba^{-r}b)^{(q-1)/2}a^{-(r-1)}b
  (a^{r-2}ba^{-(r-2)}b)^{(q-1)/2}a^{r-1}b \\
&=(a^{r}\cdot a^r\gamma\cdot a^{-r}\cdot a^r\gamma)^{(q-1)/2}
  \cdot a^{-(r-1)}\cdot a^r\gamma \\
&\qquad\qquad\cdot
  (a^{r-2}\cdot a^r\gamma\cdot a^{-(r-2)}\cdot a^r\gamma)^{(q-1)/2}
  \cdot a^{r-1}\cdot a^r\gamma \\
&=\gamma^{q-1}a\gamma(a^{-2}\gamma a^2\gamma)^{(q-1)/2}a^{-1}\gamma \\
&=\gamma^{q-1}a\gamma(\gamma^{\tau^{-2}+1})^{(q-1)/2}a^{-1}\gamma \\
&=\gamma^{(q-1)+\tau((\tau^{-2}+1)(q-1)/2+1)+1}.
\end{align*}
We may assume this does not generate $\mathbb Z_p$, for otherwise \Cref{FGL} applies. So
\[
(q-1)+\tau((\tau^{-2}+1)(q-1)/2+1)+1\equiv0\pmod p,
\]
which implies
\begin{equation}\label{eq.1.case.2}
\tau(q+1)+2q+\tau^{-1}(q-1)\equiv0\pmod p.
\end{equation}
Replacing $a$ by $a^{-1}$ in $C_3$ replaces $\tau$ by $\tau^{-1}$, so we also have
\begin{equation}\label{eq.2.case.2}
\tau(q-1)+2q+\tau^{-1}(q+1)\equiv0\pmod p.
\end{equation}
Subtracting \Cref{eq.2.case.2} from \Cref{eq.1.case.2} gives
$\tau^2\equiv1\pmod p$, which is impossible.
\end{subcase}
\end{case}

\begin{case}\label{case:2pqr-3}
Assume $ |\overline a|=|\overline b|=2r $. Then
$\overline b=\overline a^m$ for some integer $m$ with $\gcd(m,2r)=1$. Replacing
$a$ by $a^{-1}$, if necessary, we may assume $1\le m\le r-1$. In particular,
$m$ is odd. Since $\overline b=\overline a^m$, we can write $b=a^m\gamma$ with
$\gamma\in G'$. The equality $G=\langle a,b\rangle=\langle a,\gamma\rangle$ forces
$\gamma$ to generate $G'$. By
\Cref{Hall Theorem}(\ref{Hall Theorem4}), we have
\[
a\gamma a^{-1}=\gamma^\tau,\qquad
\tau^{2r}\equiv1\pmod{pq},\qquad
\gcd(\tau-1,pq)=1.
\]
Consider $\overline G=\mathcal C_{2r}$. We have
$C_4=(\overline b,\overline a^{-(m-1)},\overline b,\overline a^{2r-m-1})$
as a hamiltonian cycle in $\Cay(\overline G;\overline S)$. Its voltage is
\begin{align*}
\operatorname{Volt}(C_4)
&=ba^{-(m-1)}ba^{2r-m-1} \\
&=a^m\gamma\cdot a^{-m+1}\cdot a^m\gamma\cdot a^{2r-m-1} \\
&=\gamma^{\tau^m+\tau^{m+1}}
 =\gamma^{\tau^m(1+\tau)}.
\end{align*}
We may assume this does not generate $G'$, for otherwise \Cref{FGL} applies. Thus
$\gcd(1+\tau,pq)\neq1$. Without loss of generality, assume
$\tau\equiv -1\pmod q$.

\begin{subcase}
Assume $m>3$. Since $m$ is odd and $m<r$, we have $m\le r-2$. Then
\[
C_5=(\overline b^{-2},\overline a^{-2},\overline b,\overline a,
\overline b,\overline a^{-(m-2)},\overline b^{-1},\overline a^{m-4},
\overline b^{-1},\overline a^{-(2r-2m-3)})
\]
is a hamiltonian cycle in $\Cay(\overline G;\overline S)$.
\begin{figure}[H]
    \centering
    \includegraphics[width=0.5\linewidth]{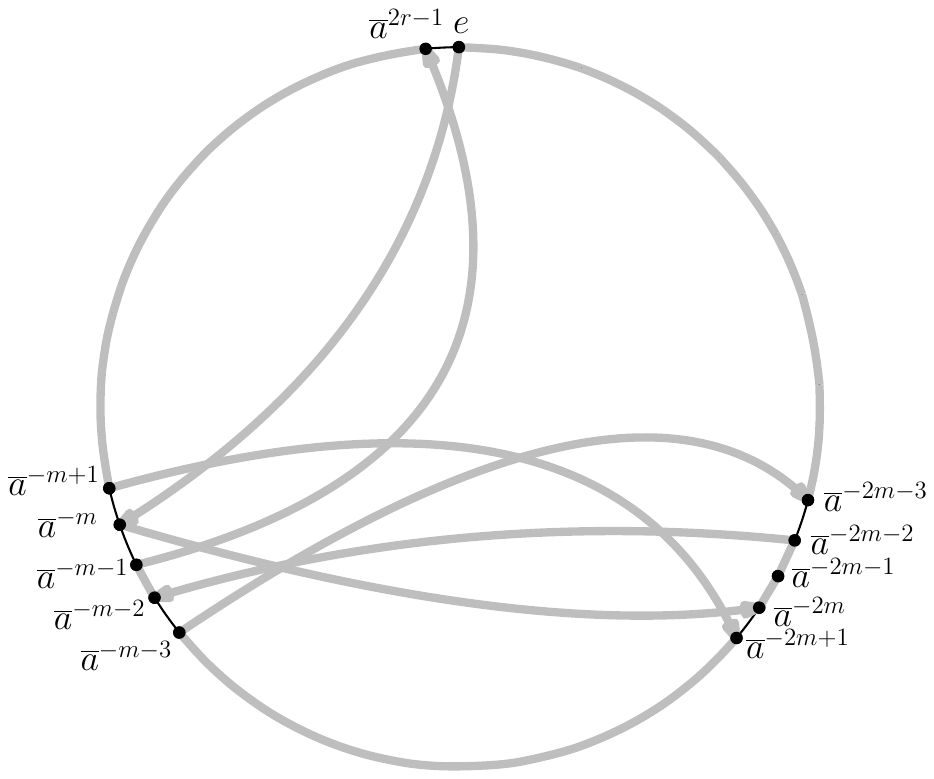}
    \caption{The union of the thick gray edges forms the hamiltonian cycle $C_5$ in $\Cay(\overline{G}; \overline{S})$.}
    \label{fig:case3-c5}
\end{figure}
Now we calculate its voltage:
\begin{align*}
\operatorname{Volt}(C_5)
&=b^{-2}a^{-2}baba^{-(m-2)}b^{-1}a^{m-4}b^{-1}a^{-(2r-2m-3)} \\
&=\gamma^{-1}a^{-m}\gamma^{-1}a^{-m}a^{-2}a^m\gamma a a^m\gamma
  a^{-m+2}\gamma^{-1}a^{-m}a^{m-4}\gamma^{-1}a^{-m}a^{-2r+2m+3} \\
&=\gamma^{-1}a^{-m}\gamma^{-1}a^{-2}\gamma a^{m+1}\gamma
  a^{-m+2}\gamma^{-1}a^{-4}\gamma^{-1}a^{m+3} \\
&=\gamma^{-1-\tau^{-m}+\tau^{-m-2}+\tau^{-1}-\tau^{-m+1}-\tau^{-m-3}} \\
&=\gamma^{-1+\tau^{-1}-\tau^{-m+1}-\tau^{-m}+\tau^{-m-2}-\tau^{-m-3}}.
\end{align*}
We may assume $\operatorname{Volt}(C_5)$ does not generate
$G'=\mathbb Z_p\times\mathbb Z_q$, for otherwise \Cref{FGL} applies. Since
$\tau\equiv -1\pmod q$ and $m$ is odd, the exponent of $\gamma$ in
$\operatorname{Volt}(C_5)$ is congruent to
\[
-1-1-1+1-1-1=-4\pmod q.
\]
Because $q\ge5$, this is non-zero modulo $q$. Hence the voltage has non-trivial
$\mathbb Z_q$-component, so the only remaining possibility is that its
$\mathbb Z_p$-component is trivial. Thus
\begin{equation}\label{7.1B}
0\equiv -1+\tau^{-1}-\tau^{-m+1}-\tau^{-m}+\tau^{-m-2}-\tau^{-m-3}\pmod p.
\end{equation}
Multiplying by $-\tau^{m+3}$, we get
\begin{equation}\label{7.1C}
0\equiv \tau^{m+3}-\tau^{m+2}+\tau^4+\tau^3-\tau+1\pmod p.
\end{equation}
Replacing $\{\overline a,\overline b\}$ by
$\{\overline a^{-1},\overline b^{-1}\}$ replaces $\tau$ by $\tau^{-1}$. Applying the
same argument gives
\begin{equation}\label{7.1D}
0\equiv -\tau^{m+3}+\tau^{m+2}-\tau^m-\tau^{m-1}+\tau-1\pmod p.
\end{equation}
Adding \Cref{7.1C} and \Cref{7.1D} yields
\[
0\equiv -\tau^m-\tau^{m-1}+\tau^4+\tau^3
  =\tau^3(\tau+1)(1-\tau^{m-4})\pmod p.
\]
If $\tau\equiv -1\pmod p$, then, since also $\tau\equiv -1\pmod q$, the quotient
$\mathcal C_{2r}$ inverts $\mathbb Z_{pq}$. Hence $\mathbb Z_r$ centralizes $G'$,
so $G\cong D_{2pq}\times\mathbb Z_r$, and \Cref{lemma 5.8} applies. Therefore we may
assume $\tau^{m-4}\equiv1\pmod p$. But $\tau^{2r}\equiv1\pmod p$, so the order of
$\tau$ modulo $p$ divides $\gcd(m-4,2r)$. Since $m$ is odd and $3<m<r$, we have
$\gcd(m-4,2r)=1$. This contradicts $\gcd(\tau-1,pq)=1$.
\end{subcase}

\begin{subcase}
Assume $m\le3$. Since $m$ is odd, we have $m\in\{1,3\}$.

First assume $m=1$. Then $\overline b=\overline a$ and $b=a\gamma$. The quotient
cycle $\overline C_6=(\overline a^{2r-1},\overline b)$
is a hamiltonian cycle in $\Cay(\overline G;\overline S)$. Indeed, it visits
\[
e,\overline a,\overline a^2,\ldots,\overline a^{2r-1},e.
\]
Its lift is $C_6=(a^{2r-1},b)$, and its voltage is
\[
\operatorname{Volt}(C_6)=a^{2r-1}b=a^{2r-1}a\gamma=\gamma.
\]
Since $\gamma$ generates $G'$, \Cref{FGL} applies.

Now assume $m=3$. Thus $\overline b=\overline a^3$ and $b=a^3\gamma$. We claim that
\[
\overline C_7=(\overline a,\overline b^{-1},\overline a,\overline b,
\overline a^{2r-5},\overline b)
\]
is a hamiltonian cycle in $\Cay(\overline G;\overline S)$. Indeed, since
$\overline b=\overline a^3$, the successive vertices of this walk are
\[
e,\ \overline a,\ \overline a^{-2},\ \overline a^{-1},\
\overline a^2,\ \overline a^3,\ldots,\overline a^{2r-3},\ e.
\]
These are all $2r$ vertices of $\overline G=\langle\overline a\rangle$, before the final
return to $e$. Therefore $\overline C_7$ is a hamiltonian cycle.
Let
\[
C_7=(a,b^{-1},a,b,a^{2r-5},b)
\]
be the corresponding lift to $G$. Since $b=a^3\gamma$, we have
$b^{-1}=\gamma^{-1}a^{-3}$. Therefore
\begin{align*}
\operatorname{Volt}(C_7)
&=ab^{-1}aba^{2r-5}b \\
&=a\gamma^{-1}a^{-3}a\,a^3\gamma a^{2r-5}a^3\gamma \\
&=a\gamma^{-1}a\gamma a^{2r-2}\gamma \\
&=\gamma^{-\tau+\tau^2+\tau^{2r}} \\
&=\gamma^{\tau^2-\tau+1},
\end{align*}
where the last equality uses $\tau^{2r}\equiv1\pmod{pq}$.

We show that $\tau^2-\tau+1$ is relatively prime to $pq$. Since
$\tau\equiv -1\pmod q$, we have
\[
\tau^2-\tau+1 = (-1)^2-(-1)+1=3\not\equiv0\pmod q,
\]
because $q\ge5$.
It remains to show that $\tau^2-\tau+1\not\equiv0\pmod p$. Suppose, for a contradiction, that
\[
\tau^2-\tau+1\equiv0\pmod p.
\]
Then
\[
\tau^3+1=(\tau+1)(\tau^2-\tau+1)\equiv0\pmod p.
\]
Also $\tau\not\equiv -1\pmod p$, because substituting $\tau=-1$ into
$\tau^2-\tau+1$ gives $3\not\equiv0\pmod p$. Hence $\tau^3\equiv-1\pmod p$, so
$\tau$ has order $6$ modulo $p$. But $\tau^{2r}\equiv1\pmod p$, so the order of
$\tau$ modulo $p$ must divide $2r$. This is impossible, because $r\ge5$ is an odd
prime, so $6\nmid2r$.

Therefore $\tau^2-\tau+1\not\equiv0\pmod p$ and $\tau^2-\tau+1\not\equiv0\pmod q$. Since
$G'=\langle\gamma\rangle$ has order $pq$, it follows that $\gamma^{\tau^2-\tau+1}$ generates
$G'$. Hence $\operatorname{Volt}(C_7)$ generates $G'$, and \Cref{FGL} applies.\qedhere
\end{subcase}
\end{case}
\end{proof}

\begin{proof}[Proof of Theorem~\ref{main_1}]
Let $S_0\subseteq S$ be a minimal generating subset of $G$. If $|S_0|\ge3$, then
Theorem~\ref{pqrs-3} gives a hamiltonian cycle in $\Cay(G;S_0)$. Since
$\Cay(G;S_0)$ is a spanning subgraph of $\Cay(G;S)$, this is also a hamiltonian cycle
in $\Cay(G;S)$.

Thus we may assume $|S_0|\le2$. If all four prime divisors of $|G|$ are odd, then
Theorem~\ref{theorem 1.2}\eqref{pqrs} applies. Hence we may assume that one of the
prime divisors is $2$, so $|G|=2pqr$. If one of $p,q,r$ is $3$, then
$|G|=6uv$ for two distinct primes $u,v$, and
Theorem~\ref{theorem 1.2}\eqref{theorem 1.2.2} applies. Therefore the only remaining
case is $|G|=2pqr$ with $p,q,r\ge5$. Applying Theorem~\ref{2pqr} to the
generating set $S_0$, we obtain a hamiltonian cycle in $\Cay(G;S_0)$. Since
$\Cay(G;S_0)$ is a spanning subgraph of $\Cay(G;S)$, this is also a hamiltonian
cycle in $\Cay(G;S)$. This completes the proof.
\end{proof}

\section*{Acknowledgments}
We thank Dave Witte Morris for a discussion during the preparation of this paper.


\begin{thebibliography}{10}

\bibitem{8pq}
Fateme Abedi, Dave~Witte Morris, Javanshir Rezaee, and Mohammad~Reza Salarian.
\newblock {C}ayley graphs of order {$8pq$} are {Hamiltonian}.
\newblock {\em Contributions to Discrete Mathematics}, 20(2):311--336, 2025.
\newblock \href {http://arxiv.org/abs/2304.03348} {\path{arXiv:2304.03348}},
  \href {https://doi.org/10.55016/ojs/cdm.v20i2.77376}
  {\path{doi:10.55016/ojs/cdm.v20i2.77376}}.

\bibitem{babai1979problem}
L{\'a}szl{\'o} Babai.
\newblock Problem 17, unsolved problems.
\newblock Summer Research Workshop in Algebraic Combinatorics, Simon Fraser
  University, 1979.

\bibitem{babai1995automorphism}
L{\'a}szl{\'o} Babai.
\newblock Automorphism groups, isomorphism, reconstruction.
\newblock In Ronald~L. Graham, Martin Gr{\"o}tschel, and L{\'a}szl{\'o}
  Lov{\'a}sz, editors, {\em Handbook of Combinatorics}, volume~2, chapter~27,
  pages 1447--1540. North-Holland, Amsterdam, 1995.

\bibitem{bermond1978hamiltonian}
Jean-Claude Bermond.
\newblock {Hamiltonian} graphs.
\newblock In Lowell~W. Beineke and Robin~J. Wilson, editors, {\em Selected
  Topics in Graph Theory}, pages 127--167. Academic Press, London, 1978.

\bibitem{BryantHerkeMaenhautWebb2018}
Darryn Bryant, Sarada Herke, Barbara Maenhaut, and Bridget~S. Webb.
\newblock On {Hamilton} decompositions of infinite circulant graphs.
\newblock {\em Journal of Graph Theory}, 88(3):434--448, 2018.
\newblock \href {https://doi.org/10.1002/jgt.22223}
  {\path{doi:10.1002/jgt.22223}}.

\bibitem{Quimpo}
Chuan~Chong Chen and Norman~F. Quimpo.
\newblock On some classes of {Hamiltonian} graphs.
\newblock {\em Southeast Asian Bulletin of Mathematics}, (Special
  Issue):252--258, 1979.

\bibitem{Fifth}
Stephen~J. Curran, Dave~Witte Morris, and Joy Morris.
\newblock {C}ayley graphs of order {$16p$} are {Hamiltonian}.
\newblock {\em Ars Mathematica Contemporanea}, 5(2):185--211, 2012.
\newblock \href {https://doi.org/10.26493/1855-3974.207.8e0}
  {\path{doi:10.26493/1855-3974.207.8e0}}.

\bibitem{darijani2025arc}
Iren Darijani, Babak Miraftab, and Dave~Witte Morris.
\newblock Arc-disjoint {Hamiltonian} paths in cartesian products of directed
  cycles.
\newblock {\em Ars Mathematica Contemporanea}, 25(2):Article P2.10, 2025.
\newblock \href {https://doi.org/10.26493/1855-3974.3047.c2d}
  {\path{doi:10.26493/1855-3974.3047.c2d}}.

\bibitem{ErdeLehner2022}
Joshua Erde and Florian Lehner.
\newblock {Hamiltonian} decompositions of 4-regular {C}ayley graphs of infinite
  abelian groups.
\newblock {\em Journal of Graph Theory}, 101(3):559--571, 2022.
\newblock \href {https://doi.org/10.1002/jgt.22840}
  {\path{doi:10.1002/jgt.22840}}.

\bibitem{ErdeLehnerPitz2020}
Joshua Erde, Florian Lehner, and Max Pitz.
\newblock {Hamilton} decompositions of one-ended {C}ayley graphs.
\newblock {\em Journal of Combinatorial Theory, Series B}, 140:171--191, 2020.
\newblock \href {https://doi.org/10.1016/j.jctb.2019.05.005}
  {\path{doi:10.1016/j.jctb.2019.05.005}}.

\bibitem{Sixth}
Ebrahim Ghaderpour and Dave~Witte Morris.
\newblock {C}ayley graphs of order {$30p$} are {Hamiltonian}.
\newblock {\em Discrete Mathematics}, 312(24):3614--3625, 2012.
\newblock \href {https://doi.org/10.1016/j.disc.2012.08.017}
  {\path{doi:10.1016/j.disc.2012.08.017}}.

\bibitem{Godsil}
Chris Godsil and Gordon Royle.
\newblock {\em Algebraic Graph Theory}, volume 207 of {\em Graduate Texts in
  Mathematics}.
\newblock Springer, New York, 2001.
\newblock \href {https://doi.org/10.1007/978-1-4613-0163-9}
  {\path{doi:10.1007/978-1-4613-0163-9}}.

\bibitem{Hall}
Marshall Hall, Jr.
\newblock {\em The Theory of Groups}.
\newblock Macmillan, New York, 1959.

\bibitem{kutnar2009hamilton}
Klavdija Kutnar and Dragan Maru{\v{s}}i{\v{c}}.
\newblock {Hamilton} cycles and paths in vertex-transitive graphs---current
  directions.
\newblock {\em Discrete Mathematics}, 309(17):5491--5500, 2009.
\newblock \href {https://doi.org/10.1016/j.disc.2009.02.017}
  {\path{doi:10.1016/j.disc.2009.02.017}}.

\bibitem{Tenth}
Klavdija Kutnar, Dragan Maru{\v{s}}i{\v{c}}, Dave~Witte Morris, Joy Morris, and
  Primo{\v{z}} {\v{S}}parl.
\newblock {Hamiltonian} cycles in {C}ayley graphs whose order has few prime
  factors.
\newblock {\em Ars Mathematica Contemporanea}, 5(1):27--71, 2012.
\newblock \href {https://doi.org/10.26493/1855-3974.177.341}
  {\path{doi:10.26493/1855-3974.177.341}}.

\bibitem{LEHNER2026114798}
Florian Lehner, Farzad Maghsoudi, and Babak Miraftab.
\newblock {Hamiltonicity} of transitive graphs whose automorphism group has
  {$\mathbb{Z}_p$} as commutator subgroups.
\newblock {\em Discrete Mathematics}, 349(3):114798, 2026.
\newblock \href {https://doi.org/10.1016/j.disc.2025.114798}
  {\path{doi:10.1016/j.disc.2025.114798}}.

\bibitem{Farzad}
Farzad Maghsoudi.
\newblock {C}ayley graphs of order {$6pq$} and {$7pq$} are {Hamiltonian}.
\newblock {\em The Art of Discrete and Applied Mathematics}, 5(1):Paper No.
  P1.10, 2022.
\newblock 55 pages.
\newblock \href {https://doi.org/10.26493/2590-9770.1389.fa2}
  {\path{doi:10.26493/2590-9770.1389.fa2}}.

\bibitem{Farzad2}
Farzad Maghsoudi.
\newblock On {Hamiltonicity} of {C}ayley graphs of order {$pqrs$}.
\newblock {\em Australasian Journal of Combinatorics}, 84:124--166, 2022.
\newblock URL: \url{https://ajc.maths.uq.edu.au/pdf/84/ajc_v84_p124.pdf}.

\bibitem{MiraftabMorris2025}
Babak Miraftab and Dave~Witte Morris.
\newblock On vertex-transitive graphs with a unique {Hamiltonian} cycle.
\newblock {\em Journal of Graph Theory}, 108(1):65--99, 2025.
\newblock \href {https://doi.org/10.1002/jgt.23166}
  {\path{doi:10.1002/jgt.23166}}.

\bibitem{pqrs}
Dave~Witte Morris.
\newblock On {Hamiltonian} cycles in {C}ayley graphs of order {$pqrs$}.
\newblock {\em The Art of Discrete and Applied Mathematics}, 5(3):Paper No.
  3.12, 2022.
\newblock 10 pages.
\newblock \href {https://doi.org/10.26493/2590-9770.1442.cb1}
  {\path{doi:10.26493/2590-9770.1442.cb1}}.

\bibitem{Fourteen}
Dave~Witte Morris and Kirsten Wilk.
\newblock {C}ayley graphs of order {$kp$} are {Hamiltonian} for {$k < 48$}.
\newblock {\em The Art of Discrete and Applied Mathematics}, 3(2):Paper No.
  P2.02, 2020.
\newblock 17 pages.
\newblock \href {https://doi.org/10.26493/2590-9770.1250.763}
  {\path{doi:10.26493/2590-9770.1250.763}}.

\bibitem{Eighteen}
David Witte.
\newblock {C}ayley digraphs of prime-power order are {Hamiltonian}.
\newblock {\em Journal of Combinatorial Theory, Series B}, 40(1):107--112,
  1986.
\newblock \href {https://doi.org/10.1016/0095-8956(86)90068-7}
  {\path{doi:10.1016/0095-8956(86)90068-7}}.

\bibitem{Twelve}
David Witte and Joseph~A. Gallian.
\newblock A survey: {Hamiltonian} cycles in {C}ayley graphs.
\newblock {\em Discrete Mathematics}, 51(3):293--304, 1984.
\newblock \href {https://doi.org/10.1016/0012-365X(84)90010-4}
  {\path{doi:10.1016/0012-365X(84)90010-4}}.

\end{thebibliography}
\end{document}